\newcommand{\va}{\mathbf{a}}
\definecolor{author}{rgb}{0.5,0.5,0.0}
\definecolor{def}{rgb}{0.0,0.5,0.5}
\definecolor{high}{rgb}{0.1,0.2,0.5}
\definecolor{darkgreen}{rgb}{0.0,0.4,0.0}
\definecolor{darkred}{rgb}{0.4,0.0,0.0}
\newcommand{\Kon}{\mathcal{K}_o^n}
\newcommand{\bd}{\mathrm{bd}}
\newcommand{\inte}{\mathrm{int}\,}
\newcommand{\conv}{\mathrm{conv}}
\newcommand{\lin}{\mathrm{lin}\,}
\newcommand{\R}{\mathbb{R}}
\newcommand{\N}{\mathbb{N}}
\newcommand{\dbm}{\mathrm{d}}
\newcommand{\V}{\mathrm{V}}
\newcommand{\U}{\mathrm{U}}
\newcommand{\ip}[2]{\left\langle #1,#2\right\rangle}
\newcommand{\dive}{\mathrm{div}}
\newtheorem{theorem}{Theorem}[section]
\newtheorem{lemma}[theorem]{Lemma}
\newtheorem{proposition}[theorem]{Proposition}  
\numberwithin{equation}{section}
\begin{document}

\title{Cone-volume measures of polytopes} 
\author{Martin Henk}
\address{Fakult\"at f\"ur Mathematik, Otto-von-Guericke Universit\"at
  Magdeburg, Universit\"atsplatz 2, D-39106 Magdeburg, Germany}
\email{martin.henk@ovgu.de, eva.linke@ovgu.de}
\author{Eva Linke}
\thanks{Second author is supported by Deutsche Forschungsgemeinschaft;
  He 2272/5-1.}
\dedicatory{In memory of Fiona Prohaska}
\keywords{cone-volume measure, subspace concentration condition,
U-functional,   centro-affine inequalities, log-Minkowski Problem, centroid, polytope}
\subjclass[2010]{52A40, 52B11}
\begin{abstract}
The cone-volume measure of a polytope with
centroid at the origin is proved to satisfy the subspace concentration condition.
As a consequence a conjectured (a dozen years ago) fundamental sharp affine
isoperimetric inequality for the U-functional is completely
established --- along with its equality conditions.
\end{abstract}
\maketitle

\section{Introduction}
Let $\Kon$ be the set of all convex bodies in $\R^n$ having the origin
in their interiors, i.e., $K\in \Kon$ is a convex compact subset of the
$n$-dimensional Euclidean space $\R^n$ with $0\in\inte(K)$. For $K\in\Kon$ the {\em cone-volume
measure}, $\V_K$, of $K$ is a Borel measure  on the unit sphere
$S^{n-1}$ defined for a Borel set $\omega\subseteq S^{n-1}$ by
\begin{equation}
\V_K(\omega) =\frac{1}{n}\int_{x\in \nu_K^{-1}(\omega)}
\ip{x}{\nu_K(x)}\,\mathrm{d}\mathcal{H}^{n-1}(x),
\label{eq:cvm}
\end{equation}
where $\nu_K:\bd^\prime K\to S^{n-1}$ is the Gauss map of $K$, defined
on $\bd^\prime K$, the set of points of the boundary of $K$ 
having a unique outer normal, $\ip{x}{\nu_K(x)}$ is the standard inner
product on $\R^n$, and  $\mathcal{H}^{n-1}$ is the $(n-1)$-dimensional
Hausdorff measure. In recent years, cone-volume measures have 
appeared and were studied in various contexts, see, e.g.,
\cite{Barthe:2005di, Boroczky:2012vm, Boroczky:2012us, Gromov:1987vv,
  Ludwig:2010dk, Ludwig:2010tm, Naor:2007ey, Naor:2003gb,
  Paouris:2012ia, Stancu:2012ur}.  

In particular, in the very recent and groundbreaking paper \cite{Boroczky:2012us}  on the logarithmic Minkowski problem, 
B\"or\"oczky Jr., Lutwak, Yang \& Zhang characterize the cone-volume
measures of origin-symmetric convex bodies as exactly those
non-zero finite even Borel measures on $S^{n-1}$ which satisfy the
{\em subspace concentration condition}. Here a finite Borel measure $\mu$ on
$S^{n-1}$ is said to satisfy the {\em subspace concentration condition}
if for every subspace $L\subseteq\R^n$ 
\begin{equation}
  \mu(L\cap S^{n-1})\leq\frac{\dim L}{n}\mu(S^{n-1}),
\label{eq:scc}
\end{equation} 
and  equality holds in \eqref{eq:scc} for a subspace $L$ if and only
if there exists a subspace $\overline{L}$, complementary to $L$, so
that also 
\begin{equation*}
  \mu(\overline{L}\cap S^{n-1})=\frac{\dim \overline{L}}{n}\mu(S^{n-1}),
\end{equation*} 
i.e., $\mu$ is concentrated on $S^{n-1}\cap (L\cup\overline{L})$.

This concentration condition is at the core of different problems
in Convex Geometry;  it provides not only the solution to the
logarithmic Minkowski problem for origin-symmetric convex bodies \cite{Boroczky:2012us}, but, for instance, in \cite[Theorem
1.2]{Boroczky:2012vm}, it was shown that the subspace concentration
condition is also equivalent to the property that a finite Borel measure has an
affine isotropic image. 

Now let $P\in \Kon$ be a polytope with facets $F_1,\dots,F_m$, and let
$a_i\in S^{n-1}$ be the outer unit normal of the facet $F_i$,
$1\leq i\leq m$.  For each facet we consider $C_i=\conv\{0,F_i\}$,
i.e.,  the convex hull of $F_i$ with the
origin, or in other words, $C_i$ is the cone/pyramid with basis $F_i$ and
apex  $0$.   

\begin{figure}[hbt]
\begin{tikzpicture}[scale=0.8, line join=bevel]
\coordinate (V1) at (-1,-1);
\coordinate (V2) at (-1,1);
\coordinate (V3) at (1.5,2);
\coordinate (V4) at (3,0.5);
\coordinate (V5) at (1,-1.5);

\coordinate (null) at (0,0);

\coordinate (a1) at (-1,0); 
\coordinate (a2) at (-0.274,0.9615);
\coordinate (a3) at (0.707,0.707);
\coordinate (a4) at (0.707,-0.707);
\coordinate (a5) at (-0.2425,-0.97014);

\draw [fill opacity=0.1,fill=gray] (V1) -- (V2) -- (V3) -- (V4)--(V5)--cycle;
\draw [color=blue, ultra thick] (V1) -- (V2) -- (V3) -- (V4) -- (V5)--cycle;

 \draw [color=blue, thick, ->] ($0.5*(V1)+0.5*(V2)$) --
 ($0.5*(V1)+0.5*(V2)+(a1)$);
 \draw [color=blue, thick, ->] ($0.5*(V2)+0.5*(V3)$) --
 ($0.5*(V2)+0.5*(V3)+(a2)$);
 \draw [color=blue, thick, ->] ($0.5*(V3)+0.5*(V4)$) --
 ($0.5*(V3)+0.5*(V4)+(a3)$);
 \draw [color=blue,  thick, ->] ($0.5*(V4)+0.5*(V5)$) --
 ($0.5*(V4)+0.5*(V5)+(a4)$);
 \draw [color=blue, thick, ->] ($0.5*(V5)+0.5*(V1)$) --
 ($0.5*(V5)+0.5*(V1)+(a5)$);

\node[color=darkgreen, scale=1.4] at (null) {$\bullet$};

 \node[scale=1, left, yshift=0.3cm] at ($0.5*(V1)+0.5*(V2)$) {$ F_1$};
 \node[scale=1, above,xshift=0.3cm, yshift=0.1cm] at ($0.5*(V2)+0.5*(V3)$) {$ F_2$};
 \node[scale=1, right, xshift=0.0cm, yshift=0.0cm] at ($0.5*(V3)+0.5*(V4)$) {$ F_3$};
 \node[scale=1, below, xshift=0.0cm, yshift=-0.1cm] at ($0.5*(V4)+0.5*(V5)$) {$ F_4$};
 \node[scale=1, below, xshift=0.3cm, yshift=0.0cm] at
 ($0.5*(V5)+0.5*(V1)$) {$ F_5$};

 \node[scale=1, left] at  ($0.5*(V1)+0.5*(V2)+(a1)$) {$\va_1$};
 \node[scale=1, above] at ($0.5*(V2)+0.5*(V3)+(a2)$) {$\va_2$};
 \node[scale=1, right] at ($0.5*(V3)+0.5*(V4)+(a3)$) {$\va_3$};
 \node[scale=1, below] at ($0.5*(V4)+0.5*(V5)+(a4)$) {$\va_4$};
 \node[scale=1, below] at ($0.5*(V5)+0.5*(V1)+(a5)$) {$\va_5$};

\draw [fill opacity=0.1,fill=red] (V1) -- (V2) -- (null) --cycle;
\draw [fill opacity=0.1,fill=red] (V2) -- (V3) -- (null) --cycle;
\draw [fill opacity=0.1,fill=red] (V3) -- (V4) -- (null) --cycle;
\draw [fill opacity=0.1,fill=red] (V4) -- (V5) -- (null) --cycle;
\draw [fill opacity=0.1,fill=red] (V5) -- (V1) -- (null) --cycle;

\node[scale=1, right, yshift=0.2cm, color=def] at ($0.5*(V1)+0.5*(V2)$) {$C_1$};
 \node[scale=1, below,xshift=0.2cm, yshift=0.1cm, color=def] at ($0.5*(V2)+0.5*(V3)$) {$C_2$};
 \node[scale=1, left, xshift=0.0cm, yshift=-0.2cm, color=def] at ($0.5*(V3)+0.5*(V4)$) {$C_3$};
 \node[scale=1, above, xshift=-0.3cm, yshift=-0.1cm, color=def] at ($0.5*(V4)+0.5*(V5)$) {$C_4$};
 \node[scale=1, above, xshift=0.3cm, yshift=-0.1cm, color=def] at
 ($0.5*(V5)+0.5*(V1)$) {$C_5$};
\end{tikzpicture}
\caption{Cone-volumes of a polytope}
\end{figure}
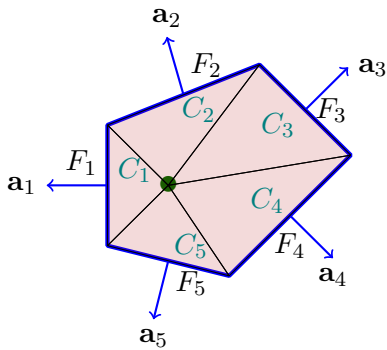

 The cone-volume measure of $P$ is given by (cf.~\eqref{eq:cvm})
\begin{equation*}
\V_P=\sum_{i=1}^m \V(C_i)\delta_{a_i},
\end{equation*}
where $\V(C_i)$ is the volume, i.e., $n$-dimensional Lebesgue measure,
of $C_i$ and $\delta_{a_i}$ denotes the delta measure concentrated on
$a_i$. Hence, $P$ satisfies  the subspace concentration condition
(cf.~\eqref{eq:scc}) if for  every subspace $L\subseteq\R^n$ 
\begin{equation}
  \sum_{a_i\in L}\V(C_i) \leq\frac{\dim L}{n}\,\V(P),
\label{eq:sccp}
\end{equation} 
and  equality holds in \eqref{eq:sccp} for a subspace $L$ if and only
if there exists a subspace $\overline{L}$, complementary to $L$, so
that $\{a_j: a_j\notin L\}\subset \overline{L}$. In other words, $A=(A \cap
L)\cup (A\cap\overline{L})$, where $A=\{a_1,\dots,a_m\}$.

In general, the cone-volume measure depends on the position of
the origin and not every $K\in\Kon$ fulfills  the subspace
concentration condition. In order to extend results from the
origin-symmetric case, in 
\cite[Problem 8.9]{Boroczky:2012vm} it is
asked whether the cone-volume measure of convex bodies having the centroid at the origin
satisfies the subspace concentration condition and our main result
gives an affirmative answer in the case of polytopes. 

\begin{theorem} Let $P\in\Kon$ be a polytope with centroid  at
  the origin. The cone-volume measure of $P$  satisfies the subspace
  concentration condition. 
\label{thm:main}
\end{theorem}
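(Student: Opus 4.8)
The plan is to fix a linear subspace $L$ with $k:=\dim L\in\{1,\dots,n-1\}$ (the cases $k=0,n$ in \eqref{eq:sccp} being trivial), to let $\pi\colon\R^n\to L$ be the orthogonal projection, and to reduce \eqref{eq:sccp} to a single analytic inequality. Since $\dive\pi\equiv k$ and, for $x$ in a facet $F_i$ with $a_i\in L$, one has $\ip{\pi(x)}{a_i}=\ip{x}{a_i}=h_i$ with $h_i:=\dist(0,\aff F_i)>0$, the divergence theorem applied to $x\mapsto\pi(x)$ on $P$ gives
\[
k\,\V(P)=n\sum_{a_i\in L}\V(C_i)+E_L,\qquad E_L:=\sum_{a_i\notin L}\int_{F_i}\ip{\pi(x)}{a_i}\,\mathrm{d}\mathcal{H}^{n-1}(x),
\]
so it suffices to show $E_L\ge0$, with equality exactly in the claimed cases. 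First I would not attack $E_L$ directly but pass to $\bar P:=\pi(P)$ and its slice function $v(y):=\V_{n-k}\bigl(P\cap(y+L^\perp)\bigr)$ for $y\in\bar P$: by Fubini $\int_{\bar P}v=\V(P)$, by the centroid hypothesis $\int_{\bar P}y\,v(y)\,\mathrm{d}\mathcal{H}^k(y)=\pi\bigl(\int_P x\,\mathrm{d}x\bigr)=0$, and by Brunn--Minkowski $v^{1/(n-k)}$ is concave on $\bar P$.

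Next I would bound $\sum_{a_i\in L}\V(C_i)$ by $v$. For $a_i\in L$ the hyperplane $\aff F_i$ has normal $a_i\in L$ and hence contains a translate of $L^\perp$, with $\pi(F_i)$ a facet of $\bar P$ at distance $h_i$ from the origin; Fubini on $F_i$ then gives $\mathcal{H}^{n-1}(F_i)=\int_{\pi(F_i)}\V_{n-k}\bigl(F_i\cap(y_0+L^\perp)\bigr)\,\mathrm{d}\mathcal{H}^{k-1}(y_0)$, and together with $F_i\cap(y_0+L^\perp)\subseteq P\cap(y_0+L^\perp)$ and $\V(C_i)=\tfrac{h_i}{n}\mathcal{H}^{n-1}(F_i)$ this yields $\V(C_i)\le\tfrac{h_i}{n}\int_{\pi(F_i)}v\,\mathrm{d}\mathcal{H}^{k-1}$. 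Since the remaining facets of $\bar P$ (those not of the form $\pi(F_i)$) carry $v\equiv0$, summation gives $\sum_{a_i\in L}\V(C_i)\le\tfrac1n\sum_j h_j\int_{\bar G_j}v\,\mathrm{d}\mathcal{H}^{k-1}$ over all facets $\bar G_j$ of $\bar P$. Applying the divergence theorem inside $L$ to $y\mapsto v(y)\,y$ rewrites the right-hand side as $\tfrac kn\V(P)+\tfrac1n\int_{\bar P}\ip{\nabla v(y)}{y}\,\mathrm{d}\mathcal{H}^k(y)$, so the whole theorem reduces to the inequality
\[
\int_{\bar P}\ip{\nabla v(y)}{y}\,\mathrm{d}\mathcal{H}^k(y)\le0
\]
for every nonnegative $v$ on a convex body $\bar P$ with $0\in\inte\bar P$, such that $v^{1/(n-k)}$ is concave and $\int_{\bar P}y\,v(y)\,\mathrm{d}\mathcal{H}^k(y)=0$.

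The hard part will be this last inequality; here is where convexity of $P$ (through the concavity of $v^{1/(n-k)}$) and the centroid hypothesis must genuinely combine — the inequality is false without the latter. The plan is to foliate $\bar P$ by the segments through the origin: on the ray in direction $\theta$ the map $r\mapsto v(r\theta)$ is $\tfrac1{n-k}$-concave, and in polar coordinates the target integral becomes $\int_{S^{n-1}\cap L}\int_0^{\rho(\theta)}r^k\,\partial_r v(r\theta)\,\mathrm{d}r\,\mathrm{d}\theta$ while the constraint reads $\int_{S^{n-1}\cap L}\theta\bigl(\int_0^{\rho(\theta)}r^k v(r\theta)\,\mathrm{d}r\bigr)\mathrm{d}\theta=0$. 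One then estimates $v$ on each ray from above by the $(n-k)$-th power of the tangent at the origin of $v^{1/(n-k)}$ along that ray — whose value $v(0)^{1/(n-k)}>0$ is independent of $\theta$ and whose slope is, after integration against $\theta$, controlled precisely by the vanishing first moment — and checks that the inequality falls out. The model ($k=1$) case is the elementary fact: if $A=w^{\,n-1}$ with $w\ge0$ concave on $[a,b]\ni0$ and $\int_a^b tA(t)\,\mathrm{d}t=0$, then $b\,A(b)+|a|\,A(a)\le\int_a^b A\,\mathrm{d}t$; one proves it by writing $w(t)=w(0)+\beta t-\psi(t)$ with $\psi\ge0$ convex, $\psi(0)=0$, using $\int_a^b t\,\psi'(t)\,\mathrm{d}t\ge0$, and observing that $w\ge0$ together with $\int_a^b tA=0$ forces $\beta\,(b^2-a^2)\le0$.

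Finally I would carry out the equality analysis by tracing the two inequalities used: equality forces $v$ to be (essentially) an $(n-k)$-th power of an affine function with vanishing first moment and, through the second step, each slice $P\cap(y_0+L^\perp)$ with $y_0\in\pi(F_i)$ to lie in $F_i$; combining these I would deduce that $P$ is, after a suitable linear transformation (under which both the centroid hypothesis and \eqref{eq:sccp} are invariant), a direct product of a $k$-dimensional polytope spanned by $L$ and a complementary polytope — equivalently that $\overline L:=L^\perp$ satisfies $\{a_j:a_j\notin L\}\subseteq\overline L$ — while conversely, for such products, $E_L$ vanishes term by term and equality holds in \eqref{eq:sccp}.
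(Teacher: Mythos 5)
Your reduction is sound and, after the detour through $E_L$, coincides with the paper's: your identity $\sum_{a_i\in L}\V(C_i)=\tfrac kn\V(P)+\tfrac1n\int_{\bar P}\ip{\nabla v(y)}{y}\,\mathrm{d}y$ is exactly \eqref{eq:final} (your intermediate ``$\le$'' is in fact an equality, since for $y_0\in\pi(F_i)$ with $a_i\in L$ the whole slice $P\cap(y_0+L^\perp)$ is contained in $F_i$), and the centroid hypothesis enters only through $\int_{\bar P}y\,v(y)\,\mathrm{d}y=0$, as in the paper. The one technical point you gloss over --- applying the divergence theorem to the merely piecewise-polynomial field $y\mapsto v(y)\,y$ --- is what Lemma \ref{gausslemma} is for, and is easy to repair.

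The genuine gap is your proof of the key inequality $\int_{\bar P}\ip{\nabla v(y)}{y}\,\mathrm{d}y\le0$. Your model case rests on the claim that $w\ge0$ concave on $[a,b]\ni0$ with $\int_a^b tA(t)\,\mathrm{d}t=0$, $A=w^{n-1}$, forces $\beta(b^2-a^2)\le0$ for the slope $\beta$ of $w$ at $0$. This is false. Take $n=2$ (so $A=w$), $a=-1$, $b=1.2$, and $w_\epsilon(t)=\min\bigl\{1+\epsilon t,\ (1+\epsilon t_0)\tfrac{1.2-t}{1.2-t_0}\bigr\}$: for $\epsilon=0$ the moment $\int_{-1}^{1.2}t\,w_0(t)\,\mathrm{d}t$ changes sign as $t_0$ runs over $[0.75,0.8]$, so for each small $\epsilon>0$ one can choose $t_0(\epsilon)\approx 0.77>0$ making the moment vanish; then $w_\epsilon$ is nonnegative, concave, differentiable at $0$ with $\beta=\epsilon>0$, yet $b^2-a^2=0.44>0$. (For $n>2$ there is the additional mismatch that $\int tA'=(n-1)\int t\,w^{n-2}(\beta-\psi')$ carries the weight $w^{n-2}$, which your two unweighted facts do not control.) Likewise, bounding $v$ along rays from above by the $(n-k)$-th power of the tangent of $v^{1/(n-k)}$ at the origin cannot bound $\int_0^{\rho}r^k\partial_r v\,\mathrm{d}r$ from above: integrating by parts, this equals $\rho^k v(\rho\theta)-k\int_0^\rho r^{k-1}v\,\mathrm{d}r$, and an upper bound on $v$ acts in the wrong direction on the second term; nor does the vanishing first moment of $v$ pin down $\nabla(v^{1/(n-k)})(0)$. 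The inequality you need is true, but the proof is different and much shorter (Lemma \ref{lem:logconcave}): monotonicity of the subgradient of the concave function $\ln v$ gives $\ip{g(y)-g(0)}{y}\le0$, hence $\ip{\nabla v(y)}{y}\le v(y)\ip{g(0)}{y}$ pointwise, and integration annihilates the right-hand side by the vanishing first moment; equality forces $\ln v$ affine, whence $v$ is constant by concavity of $v^{1/(n-k)}$, and Lemma \ref{lem:constant_section} then yields the complementary subspace $\overline{L}$ (which, note, need not be $L^\perp$ even after normalization, since a linear map arranging that would also move $L$).
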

As mentioned before, in \cite{Boroczky:2012us} it was even shown that
the subspace concentration condition for even measures  characterizes
cone-volume measures of origin-symmetric convex bodies. Regarding
polytopes, the inequality \eqref{eq:sccp} was proven in the special
case $\dim L=1$ by Hern{\'a}ndez
Cifre (2007, private communication), in
the cases $\dim L=1,n-1$ by Xiong \cite{Xiong:2010fl}, and  for
origin-symmetric polytopes independently by Henk, Sch\"urmann \& Wills
\cite{Henk:2005vh} and by  He, Leng \& Li \cite{He:2006dm}.  

The motivation for studying the relation \eqref{eq:sccp} for the class
of polytopes  in \cite{He:2006dm, Xiong:2010fl} stems from  the
$\U$-functional of a polytope $P$ given by 
\begin{equation*} 
 \U(P)^n =\sum_{a_{i_1}\wedge\cdots\wedge a_{i_n}\ne 0}
  \V(C_{i_1})\cdot\ldots\cdot \V(C_{i_n}),
\end{equation*}  
where $a_{i_1}\wedge\cdots\wedge a_{i_n}\ne 0$ means that the vectors
are linearly independent.
This is a centro-affine functional, i.e., it is invariant with respect to
volume preserving linear transformations. It  was introduced by Lutwak,
Yang \& Zhang in \cite{Lutwak:2001dn} and has  proved useful in
obtaining strong 
inequalities for the volume of projection bodies of polytopes. For 
information on projection bodies we refer to the books by Gardner
\cite{Gardner:2006wn} and Schneider \cite{Schneider:1993td}, and for more
information on the importance of centro-affine functionals we refer to
 \cite{HaberlParapatits:2013, Ludwig:2010tm} and the references within.

 Here we are mainly
interested in the relation of $\U(P)$ to the volume of $P$. Obviously, $\U(P)\leq \V(P)$ and in \cite{Lutwak:2001dn} the problem was
posed that for polytopes with centroid at the origin $\U(P)$ is
bounded from below by 
\begin{equation}
    \U(P)\geq \frac{(n!)^{1/n}}{n}\V(P),
\label{eq:ufunctional} 
\end{equation} 
and equality holds if and only if $P$ is a parallelotope (see also \cite[Problem 8.6]{Boroczky:2012vm} for an extension to
convex bodies). Observe, for $n\to \infty$ the factor in the lower
bound becomes  $1/\mathrm{e}$ and so it is independent on the dimension. This is one
feature of the $\U$-functional making it so useful.    

In \cite{He:2006dm} it was shown by He, Leng \& Li that  \eqref{eq:ufunctional}  can be deduced from
\eqref{eq:sccp} and they proved \eqref{eq:ufunctional} (including the
  equality case)  for origin-symmetric
polytopes. Analogously, the results on  \eqref{eq:sccp}  in
\cite{Xiong:2010fl} were used in order to establish
\eqref{eq:ufunctional} (including the  equality case) 
for  arbitrary two- and three-dimensional polytopes  with
centroid at the origin. Following the lines of these results  we   prove \eqref{eq:ufunctional} in full generality. 
\begin{theorem} Let $P\in\Kon$ be a polytope with centroid  at
  the origin. Then 
\begin{equation*}
    \U(P)\geq \frac{(n!)^{1/n}}{n}\V(P),
\end{equation*} 
and equality holds if and only if $P$ is a parallelotope.
\label{thm:ufunctional}
\end{theorem}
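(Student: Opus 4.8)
The plan is to deduce Theorem~\ref{thm:ufunctional} from Theorem~\ref{thm:main} by a purely combinatorial-analytic argument, following the strategy sketched by He, Leng \& Li in \cite{He:2006dm}. Write $v_i=\V(C_i)$ and $V=\V(P)=\sum_{i=1}^m v_i$, so that the normalized weights $\lambda_i=v_i/V$ form a probability vector supported on $A=\{a_1,\dots,a_m\}\subset S^{n-1}$. Then
\begin{equation*}
\left(\frac{\U(P)}{\V(P)}\right)^n=\sum_{a_{i_1}\wedge\cdots\wedge a_{i_n}\ne 0}\lambda_{i_1}\cdots\lambda_{i_n},
\end{equation*}
and the claim \eqref{eq:ufunctional} is equivalent to showing that this symmetric sum over all linearly independent $n$-tuples of indices is at least $n!/n^n$, with equality exactly for parallelotopes. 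The first step is therefore to reformulate the problem entirely in terms of the probability weights $\lambda_i$ and the matroid (linear-dependence) structure of the vectors $a_i$, forgetting the geometry of $P$ except through Theorem~\ref{thm:main}.

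Next I would express the right-hand side via the elementary symmetric-type generating identity: $\sum_{i_1,\dots,i_n \text{ distinct}}\lambda_{i_1}\cdots\lambda_{i_n}$ equals $n!$ times the $n$-th elementary symmetric function $e_n(\lambda)$, but we must subtract the contributions of tuples that are linearly \emph{dependent}. The key point is that the only obstruction to a generic $n$-tuple being independent comes from ``flats'': whenever too much $\lambda$-mass sits inside a proper subspace $L$, many $n$-tuples become dependent. This is precisely where Theorem~\ref{thm:main} enters: it guarantees $\sum_{a_i\in L}\lambda_i\le (\dim L)/n$ for every proper subspace $L$. So the plan is to prove, by induction on $n$ (or on the rank of the vector configuration), a clean inequality of the form: \emph{if $\lambda$ is a probability vector on a finite subset of $S^{n-1}$ spanning $\R^n$ and satisfying the subspace concentration inequalities, then $\sum_{\text{indep. }n\text{-tuples}}\lambda_{i_1}\cdots\lambda_{i_n}\ge n!/n^n$}. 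The inductive step would condition on a hyperplane $L$ carrying $\lambda$-mass exactly $(n-1)/n$ (an extremal configuration), split the sum into tuples lying in $L$ plus one vector outside, and apply the $(n-1)$-dimensional bound to the conditional weights inside $L$; the concentration hypothesis is exactly what makes the bookkeeping close.

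An alternative, and perhaps cleaner, route for the inequality itself is a direct optimization argument: maximize $-\log\big(\sum_{\text{indep.}}\lambda_{i_1}\cdots\lambda_{i_n}\big)$, equivalently minimize the sum, over all probability vectors $\lambda$ subject to the finitely many linear constraints $\sum_{a_i\in L}\lambda_i\le (\dim L)/n$ coming from the subspaces $L=\lin A_j$ spanned by subsets. Since the objective is a polynomial and the constraint region is a polytope in $\lambda$-space, a minimizer is attained; a Lagrange/perturbation analysis at the minimizer — moving mass between two vectors that are interchangeable in the dependence structure — shows that at an optimum the mass is spread as evenly as the matroid allows, which forces the configuration to be (a linear image of) the coordinate cross-polytope normals, i.e.\ $P$ a parallelotope, where one computes the value $n!/n^n$ directly. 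I expect the main obstacle to be the equality analysis: ruling out ``degenerate'' extremal configurations that achieve the bound without being parallelotopes requires carefully tracking the equality clause of Theorem~\ref{thm:main} (namely that equality for $L$ forces $A\subset L\cup\overline{L}$ for a complementary $\overline{L}$), iterating it down a chain of complementary subspaces, and checking that the only configuration saturating \emph{all} the relevant constraints simultaneously is the parallelotope one — together with the geometric fact that a polytope whose facet normals split into $n$ pairs of opposite directions with the centroid at the origin must be a parallelotope.
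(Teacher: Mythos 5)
You have the right high-level strategy --- pass to the probability weights $\lambda_i=\V(C_i)/\V(P)$, forget the geometry except through Theorem \ref{thm:main}, and prove the purely combinatorial statement that any probability vector on a spanning subset of $S^{n-1}$ obeying $\sum_{a_i\in L}\lambda_i\le \dim L/n$ for all subspaces $L$ satisfies $\sum_{\mathrm{indep.}}\lambda_{i_1}\cdots\lambda_{i_n}\ge n!/n^n$. That reformulation is correct and is exactly what the paper (following He, Leng \& Li) proves. However, neither of the two mechanisms you offer for this combinatorial inequality closes. The induction on $n$ hinges on ``conditioning on a hyperplane carrying mass exactly $(n-1)/n$''; no such hyperplane need exist (generically none of the subspace constraints is tight), and you give no reduction to that extremal situation, so the induction cannot even start in the general case. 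The optimization route is in worse shape: the objective is a multilinear polynomial, neither convex nor concave, so the minimizer over the constraint polytope need not lie on the boundary, and the assertion that a perturbation analysis ``forces the configuration to be a parallelotope'' is unsupported --- the linear-dependence structure of the $a_i$ is fixed data, not something the minimization over $\lambda$ can alter, and the minimum of the combinatorial functional is attained by many non-parallelotopal configurations (the equality case in the theorem comes from the geometric rigidity in Theorem \ref{thm:main}, not from uniqueness of the combinatorial minimizer).

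The missing ingredient is a recursion in $k$, the length of the tuple, rather than in the dimension. Set $\sigma_k(P)^k=\sum_{a_{i_1}\wedge\cdots\wedge a_{i_k}\ne0}\V(C_{i_1})\cdots\V(C_{i_k})$. An independent $(k+1)$-tuple is an independent $k$-tuple followed by any normal outside the $k$-dimensional subspace $\lin\{a_{i_1},\dots,a_{i_k}\}$, so
\begin{equation*}
\sigma_{k+1}(P)^{k+1}=\sum_{a_{i_1}\wedge\cdots\wedge a_{i_k}\ne0}\V(C_{i_1})\cdots\V(C_{i_k})\Bigl(\V(P)-\sum_{a_l\in\lin\{a_{i_1},\dots,a_{i_k}\}}\V(C_l)\Bigr),
\end{equation*}
and applying \eqref{eq:sccp} to that span gives $\sigma_{k+1}(P)^{k+1}\ge\frac{n-k}{n}\V(P)\,\sigma_k(P)^k$; iterating from $\sigma_1(P)=\V(P)$ yields $\U(P)^n\ge \frac{n!}{n^n}\V(P)^n$ with no case analysis at all. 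Your sketch of the equality discussion is essentially on target: equality forces equality in \eqref{eq:sccp} for the span of every linearly independent $k$-subset of normals, whence by the equality clause of Theorem \ref{thm:main} and Lemma \ref{lem:constant_section} all the section functions $f_{L_I}$ are constant, and Lemma \ref{lem:parallelotope} then identifies $P$ as a parallelotope. Note, though, that the statement actually needed is this Guggenheimer--Lutwak-type characterization (constancy of $f_{L_I}$ for all $k$-subsets of a basis), which requires its own argument; ``the normals split into $n$ opposite pairs'' is the conclusion of that lemma, not a fact you may assume.
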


Finally, we  remark that the logarithmic Minkowksi problem is a
particular case  of the $L_p$-Minkowski problem, one
of the central problems in  convex geometric analysis. 

\smallskip
\noindent{\bf $L_p$-Minkowski problem.} Find necessary and sufficient
conditions on a finite Borel measure $\mu$ on the unit sphere
$S^{n-1}$ so that $\mu$ is the $L_p$-surface area measure of a convex
body in $\R^n$. 

\smallskip
For details we refer to \cite{Boroczky:2012us, Lutwak:1993uh} and the references within. Here we just want
to mention that for $p=0$ the $L_0$-surface area measure is the
cone-volume measure, and B\"or\"oczky Jr., Lutwak, Yang
\& Zhang \cite{Boroczky:2012us} solved  the $L_0$-Minkowski
problem for even measures  via the subspace concentration condition.  
Theorem \ref{thm:main} shows that it is also a necessary condition  in the case of polytopes
with centroid at the origin. For results on the sufficiency of the
subspace concentration condition  in the planar polygonal case we refer to  \cite{Stancu:2002, Stancu:2003}.

The proof of Theorem \ref{thm:main}, which will be given in the 
last section, is  based on the Gaussian divergence theorem applied to
the log-concave function measuring the volume of slices of $P$ by
parallel planes. 

\section{Preliminaries}
In order to keep the paper largely self-contained, we collect here some
basic facts from Convex Geometry and Polytope Theory needed in our
investigations. Good general references on these topics are the books
by Barvinok \cite{Barvinok:2002vr}, Gardner \cite{Gardner:2006wn}, Gruber \cite{Gruber:2007um},
Schneider \cite{Schneider:1993td} and Ziegler \cite{Ziegler:1995gl}.

As usual, for two subsets  $C, D\subseteq \R^n$ and reals
$\nu,\mu \geq 0$ the  Minkowski combination is defined by 
\begin{equation*}
  \nu\,C+\mu\,D = \{\nu\,c+\mu\,d : c\in C,\,d\in D\}.
\end{equation*}  
By the celebrated Brunn-Minkowski inequality we know that the $n$-th root 
of the volume of the Minkowski combination  is a concave function. 
 More precisely, for two convex bodies
$K_0,K_1\subset\R^n$ and for $\lambda\in[0,1]$  we have 
\begin{equation}
   \V((1- \lambda)\,K_0+\lambda\, K_1)^{1/n}\geq
   (1-\lambda)\,V(K_0)^{1/n}+ \lambda\,\V(K_1)^{1/n}
\label{eq:brunn_minkowski} 
\end{equation}
with equality for some  $0<\lambda<1$ if and only if $K_0$ and $K_1$ lie
in parallel hyperplanes  or are homothetic, i.e., there exist a
$t\in\R^n$ and $\mu\geq 0$ such that $K_1=t+\mu\,K_0$ (see, e.g.,
\cite{Gardner:2002up}, \cite[Sect. 6.1]{Schneider:1993td}).

Let $f:C\to \R_{> 0}$ be a positive function on an open  convex subset
$C\subset\R^n$ with the property that
there exists a $k\in\N$ such that $f^{1/k}$ is concave. Then by the
(weighted) arithmetic-geometric mean inequality 
\begin{equation*}
\begin{split}
f((1-\lambda)\, x+\lambda\, y) & = 
 \left(f^{1/k}((1-\lambda)\, x+\lambda\, y)\right)^k \\ &\geq  \left((1-\lambda)
 f^{1/k}(x)+\lambda f^{1/k}(y)\right)^k\\ &\geq f^{1-\lambda}(x)\cdot f^{\lambda}(y).
\end{split}
\end{equation*}
This means that $f$ belongs to the class of log-concave functions
which by the positivity of $f$ is equivalent to 
\begin{equation*}
  \ln f((1-\lambda)\, x+\lambda\, y)\geq (1-\lambda)\ln f(x)+\lambda\ln f(y),
\end{equation*}   
for $\lambda\in [0,1]$. Hence,  for all $x,y\in C$ there exists a
subgradient $g(y)\in\R^n$ such that (cf., e.g., \cite[Sect. 23]{Rockafellar:1997ww}
\begin{equation}
\ln f(x)-\ln f(y)\leq\ip{g(y)}{x-y}.
\label{eq:gradient_concave}
\end{equation}
If $f$ is differentiable at $y$,  the subgradient  is the gradient
of $\ln f$ at $y$, i.e., $g(y)=\nabla \ln f = \frac{1}{f(y)}\nabla f(y)$.

 For a 
subspace $L\subseteq\R^n$, let  $L^\perp$ be its orthogonal
complement,  
and for $X\subseteq\R^n$ we denote by $X|L$ its orthogonal
projection onto $L$, i.e., the image of $X$ under the linear map
forgetting the part of $X$  belonging to $L^\perp$. 

Here, for a convex body $K\in\Kon$ and a $k$-dimensional subspace $L$,
$0<k<n$,  we are interested in the function measuring the volume of
$K$ intersected with planes parallel to $L^\perp$, i.e., in the function 
\begin{equation}
f_L: K|L \to \R_{\geq 0}\text{ with } x\mapsto \V_{n-k}(K\cap
(x+L^\perp)), 
\label{eq:function}
\end{equation} 
where $\V_{n-k}(\cdot)$ denotes the $(n-k)$-dimensional volume.  By
the Brunn-Minkowski inequality and the remark above, $f_L$ is a log-concave
function which is positive at least in   the (relative) interior of
$K|L$ (cf.~\cite{Ball:1988vo}). $f_L$ is also called the $(n-k)$-dimensional  X-ray of $K$
parallel to $L^\perp$ (cf.~\cite[Chapter 2]{Gardner:2006wn}).

Next we want to consider this function for a polytope
$P\in\Kon$. Such a polytope may be represented as 
\begin{equation*}
  P=\{x\in\R^n : \ip{a_i}{x}\leq b_i,\, 1\leq i\leq m\},
\label{eq:polytope}
\end{equation*}
where $a_i\in S^{n-1}$ are the outer unit normals of the facets of $P$, i.e., all
$(n-1)$-dimensional faces of $P$ are given by $F_i=P\cap \{x\in\R^n :
\ip{a_i}{x}= b_i\}$, $1\leq i\leq m$. Since $0\in\inte P$ we have
$b_i>0$, and since $a_i\in S^{n-1}$, $b_i$ is the distance of $F_i$
from the origin. Thus 
\begin{equation}
  \V(P)=\frac{1}{n}\sum_{i=1}^m V_{n-1}(F_i)\,b_i = \sum_{i=1}^m\V(C_i),
\label{eq:polytope_volume}
\end{equation} 
where $C_i=\conv\{0,F_i\}$.  The boundary
$\bd P$ of $P$ is the union of the facets of $P$. In general, for a fixed
$k\in\{0,\dots,n-1\}$, the union of all
$k$-faces of $P$  is called the $k$-skeleton of $P$. The
orthogonal projection of the $(k-1)$-skeleton onto a $k$-dimensional
plane $L$ induces a polytopal subdivision $\mathcal{D}(P)_{L}$ of
$P|L$, i.e., $\mathcal{D}(P)_{L}$ is a collection of $k$-dimensional
polytopes having pairwise disjoint interiors, the intersection of any two
of them is a face of both, the union covers $P|L$ and the preimage of
the boundary of a polytope in $\mathcal{D}(P)_{L}$ is contained in a
$(k-1)$-face of $P$.   

It was used, observed and proved in
different contexts that $f_L:P|L\to \R_{\geq 0}$ is a piecewise
polynomial function, actually a spline for a generic subspace $L$. 
Here we will only  use the following
result as stated by  Gardner and Gritzmann {\cite[Proposition 3.1]{Gardner:1994ub}}.

\begin{proposition} Let $L$ be a $k$-dimensional subspace,
  $0<k<n$. The function $f_L:P|L\to \R_{\geq 0}$  is a piecewise
polynomial function of degree at most $n-k$; more precisely, on every
$k$-dimensional polytope of the subdivision $\mathcal{D}(P)_{L}$ it is
a polynomial of degree at most $n-k$.
\label{prop:subdivision}
\end{proposition}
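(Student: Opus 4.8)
The plan is to show that on the relative interior of each cell $\sigma$ of $\mathcal{D}(P)_L$ the slice $P\cap(x+L^\perp)$ has a combinatorial type that does not depend on $x$ and vertices that move affinely with $x$; then a fixed triangulation expresses $f_L$ as a sum of signed simplex volumes, each a polynomial of degree at most $n-k$ in $x$, and continuity of $f_L$ carries this over to the closed cell. To set up coordinates, decompose $\R^n=L\oplus L^\perp$ and write $a_i=(\alpha_i,\beta_i)$ with $\alpha_i\in L$, $\beta_i\in L^\perp$. For $x\in P|L$ the slice is
\[
  P_x:=P\cap(x+L^\perp)=\{v\in L^\perp:\ \ip{\beta_i}{v}\le b_i-\ip{\alpha_i}{x},\ 1\le i\le m\},
\]
an $(n-k)$-dimensional polytope whose facet normals $\beta_i$ are fixed and whose right-hand sides depend affinely on $x$.

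The crucial point is that the combinatorial type of $P_x$ is constant for $x\in\mathrm{relint}(\sigma)$. A face $G$ of $P$ meets $x+L^\perp$ exactly when $x\in G|L$, and it then contributes the face $G\cap(x+L^\perp)$ of $P_x$. In any polytope every face of dimension $<k$ is contained in some $(k-1)$-dimensional face (passing upward in the face lattice), so the union of the projections $G|L$ over all faces $G$ with $\dim G<k$ is contained in the projection of the $(k-1)$-skeleton, which by definition of $\mathcal{D}(P)_L$ is precisely the union of the relative boundaries of the cells. Hence for $x\in\mathrm{relint}(\sigma)$ the plane $x+L^\perp$ meets only faces of $P$ of dimension $\ge k$, and meets each such face in its relative interior; and since the relative boundary of each $G|L$ again lies in the projection of lower-dimensional faces, i.e.\ in the walls of $\mathcal{D}(P)_L$, the set of faces $G$ with $\dim G\ge k$ and $x\in G|L$ stays constant on $\mathrm{relint}(\sigma)$. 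This produces a fixed isomorphism between the face lattice of $P_x$ and a fixed subset of the face lattice of $P$. In particular each vertex $v(x)$ of $P_x$ is the unique solution of a fixed system of $n-k$ of the equations $\ip{\beta_i}{v}=b_i-\ip{\alpha_i}{x}$ with linearly independent left-hand sides, so by Cramer's rule $v(x)$ is an affine function of $x$ on $\sigma$.

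Next I would triangulate $P_x$ by a procedure determined only by its combinatorial type -- for instance by iterating the pyramidal decomposition over a chosen vertex at each stage -- obtaining top-dimensional simplices $\conv\{v_{\ell,0}(x),\dots,v_{\ell,n-k}(x)\}$ whose vertices are vertices of $P_x$, hence affine functions of $x$. For $x\in\mathrm{relint}(\sigma)$ this gives
\[
  f_L(x)=\sum_{\ell}\V_{n-k}\!\left(\conv\{v_{\ell,0}(x),\dots,v_{\ell,n-k}(x)\}\right)
  =\frac{1}{(n-k)!}\sum_{\ell}\left|\det\!\left(v_{\ell,1}(x)-v_{\ell,0}(x),\dots,v_{\ell,n-k}(x)-v_{\ell,0}(x)\right)\right|.
\]
Since the simplices remain full-dimensional on the connected set $\mathrm{relint}(\sigma)$, each determinant is nowhere zero there, hence of constant sign, so after relabelling the vertices of each simplex the sum of absolute values equals a single polynomial $\tfrac{1}{(n-k)!}\sum_\ell\det(\cdots)$. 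Each summand is the determinant of an $(n-k)\times(n-k)$ matrix with entries affine in $x$, hence a polynomial in $x$ of degree at most $n-k$; and because $f_L$ is continuous on $P|L$ this polynomial identity extends from $\mathrm{relint}(\sigma)$ to the closed cell.

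The step I expect to be the main obstacle is the combinatorial-constancy claim: one must argue carefully that the cells of $\mathcal{D}(P)_L$, defined through the projection of the $(k-1)$-skeleton, are exactly (refinements of) the maximal regions on which the combinatorial type of the slice is constant, in particular handling faces whose projections degenerate for a non-generic $L$. Once that is in hand, the remaining bookkeeping -- choosing a combinatorially rigid triangulation and controlling the orientations so that the answer is an honest polynomial rather than a sum of absolute values -- is routine.
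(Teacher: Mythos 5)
The paper offers no proof of Proposition~\ref{prop:subdivision}: it is imported verbatim from Gardner and Gritzmann \cite{Gardner:1994ub}, so there is nothing internal to compare your argument with. What you propose is the standard self-contained proof, and it is essentially correct: the slice $P\cap(x+L^\perp)$ has fixed facet normals $\beta_i=a_i|L^\perp$ and right-hand sides affine in $x$; on the relative interior of a cell its combinatorial type is constant, its vertices are affine in $x$ by Cramer's rule, and a combinatorially determined (pulling) triangulation writes $f_L$ as $\tfrac{1}{(n-k)!}$ times a sum of $(n-k)\times(n-k)$ determinants with affine entries, hence a polynomial of degree at most $n-k$, which extends to the closed cell by (radial) continuity of the concave function $f_L^{1/(n-k)}$. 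The one point you rightly flag as delicate is also the one your sketch passes over too quickly: you justify constancy of the incidence pattern by saying that the relative boundary of $G|L$ lies in the projection of lower-dimensional faces and hence in the walls, but a proper face $F$ of $G$ may have $\dim F\ge k$ while $\dim(F|L)\le k-1$, so being a lower-dimensional face of $P$ does not by itself place $F|L$ inside $S|L$, the projection of the $(k-1)$-skeleton $S$. The missing lemma is that every face $F$ of $P$ with $\dim(F|L)=j\le k-1$ satisfies $F|L\subseteq S|L$; it follows by noting that for $y\in F|L$ the fiber $F\cap(y+L^\perp)$ is a nonempty polytope cut out of $F$ by an affine subspace of codimension $j$ in $\aff(F)$, so any vertex of that fiber lies on a face of $F$ (hence of $P$) of dimension at most $j\le k-1$ and projects to $y$. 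With this inserted, every $G|L$ with $\dim(G|L)=k$ has relative boundary in $S|L$, every $G|L$ with $\dim(G|L)<k$ is entirely contained in $S|L$, the incidence pattern $\{G : x\in G|L\}$ is therefore locally constant off $S|L$, and the rest of your argument goes through.
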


The centroid $c(S)$ of a set  $S\subset\R^n$ with $\V(S)>0$  
 is defined as 
\begin{equation*}
   c(S)=\frac{1}{\V(S)}\int_S x\, \mathrm{d}x, 
\end{equation*}
where  $\mathrm{d}x$ is the abbreviation for
$\mathrm{d}\mathcal{H}^n(x)$. For lower dimensional sets
$S\subset\R^n$, $S\ne\emptyset$,  the
centroid $c(S)$ is calculated with respect to the space given by the
affine hull of $S$. 

For $K\in\Kon$ with $c(K)=0$ and a subspace $L$ with $0<\dim L<n$
we have by Fubini's theorem with respect to the decomposition $L\oplus L^{\perp}$   
\begin{equation*}
\begin{split}
0 & =\int_K x\, \mathrm{d}x \\ &= \int_{K|L}\left(
  \int_{(\hat{x}+L^\perp)\cap K} \tilde{x}\,
  \mathrm{d}\tilde{x}\right) \mathrm{d}\hat{x}\\ & = \int_{K|L}
   f_L(\hat{x})\, c((\hat{x}+L^\perp)\cap K)\,\mathrm{d}\hat{x}.
\end{split}
\end{equation*}
Writing $c((\hat{x}+L^\perp)\cap K)=\hat{x}+\tilde{y}$ with
$\tilde{y}\in L^\perp$ gives 
\begin{equation}
 \int_{K|L}  f_L(\hat{x})\,\hat{x}\,\mathrm{d}\hat{x} =0,
\label{eq:centroid}
\end{equation}
i.e., the first moment of $f_L$  vanishes. This will be the main 
property of the centroid used later on. Indeed, we will need it in
order to apply the following lemma on log-concave functions. 

\begin{lemma} Let $C\in\Kon$, and let  $f:\inte C\to\R_{>0}$ be a log-concave
  function with $\int_C f(x)\,x\,\dbm x=0$. 
 Furthermore, assume that $\nabla f(x)$ exists  almost everywhere on
 $\inte C$, 
   and that also $\int_C \langle x, \nabla f(x)\rangle \,\dbm x$ exists. Then  
\begin{equation*} 
\int_{C} \langle x, \nabla
  f(x)\rangle\, \dbm x\leq 0, 
\end{equation*}
 with equality if and only if there exist
  $c\in\R^n$, $\gamma\in\R_{>0}$ with
  $f(x)=\gamma\,\mathrm{e}^{\langle c,x\rangle}$.
\label{lem:logconcave}
\end{lemma}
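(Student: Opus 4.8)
\textbf{Proof plan for Lemma \ref{lem:logconcave}.}

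The plan is to prove the inequality by integration by parts (equivalently, the Gaussian divergence theorem) and then to identify the equality case via the sharpness of log-concavity. First I would rewrite the integrand using the identity $\langle x,\nabla f(x)\rangle = \dive(f(x)\,x) - n\,f(x)$, which holds pointwise wherever $f$ is differentiable since $\dive(x)=n$. Integrating over $C$ and applying the divergence theorem to the vector field $x\mapsto f(x)\,x$ gives
\begin{equation*}
\int_C \langle x,\nabla f(x)\rangle\,\dbm x = \int_{\bd C} f(x)\,\langle x,\nu_C(x)\rangle\,\dbm\mathcal{H}^{n-1}(x) - n\int_C f(x)\,\dbm x,
\end{equation*}
so the claim is equivalent to the sharp inequality
\begin{equation*}
\int_{\bd C} f(x)\,\langle x,\nu_C(x)\rangle\,\dbm\mathcal{H}^{n-1}(x) \leq n\int_C f(x)\,\dbm x .
\end{equation*}
A technical point I would need to address first: $f$ is only assumed positive and differentiable a.e.\ on $\inte C$, not necessarily continuous up to $\bd C$, so I would either work with an exhaustion of $C$ by bodies $C_\varepsilon \Subset \inte C$ and pass to the limit, or invoke the integrability hypothesis on $\int_C \langle x,\nabla f\rangle\,\dbm x$ to justify the boundary term; log-concavity guarantees $f$ has at most exponential growth so these limits are controlled.

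The heart of the argument is the sharp boundary inequality, and this is where the hypothesis $\int_C f(x)\,x\,\dbm x = \vnull$ enters. The idea is to compare $f$ with its value at a point via the subgradient inequality \eqref{eq:gradient_concave}. Fix any $y\in\inte C$ with subgradient $g=g(y)$; then $f(x)\leq f(y)\,\mathrm{e}^{\langle g,x-y\rangle}$ for all $x\in C$. The cleanest route is: the function $h(x) = f(y)\,\mathrm{e}^{\langle g, x-y\rangle}$ is log-affine, dominates $f$, and for log-affine functions the divergence-theorem computation is an \emph{identity} — indeed for $h(x)=\gamma\,\mathrm{e}^{\langle c,x\rangle}$ one checks directly (again by the divergence theorem, or by a scaling/homogeneity argument) that $\int_C \langle x,\nabla h(x)\rangle\,\dbm x = \int_C \langle c,x\rangle h(x)\,\dbm x$, and this is where the vanishing first moment of $f$, not $h$, must be leveraged. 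So instead I would argue as follows: by the vanishing-first-moment hypothesis, for \emph{every} $c\in\R^n$ we have $\int_C \langle c,x\rangle f(x)\,\dbm x = 0$; in particular we are free to subtract such a term. Using $\langle x,\nabla f(x)\rangle = \langle x, \nabla f(x)\rangle - \langle c, x\rangle f(x) = \langle \nabla f(x) - c\, f(x), x\rangle = f(x)\,\langle \nabla\ln f(x) - c, x\rangle$, it suffices to find $c$ and then show $\int_C f(x)\,\langle \nabla \ln f(x) - c, x\rangle\,\dbm x \leq 0$. The natural choice is to take $c$ to be (a subgradient at) a \emph{maximizer} of $\ln f$, or — better for the equality analysis — to run the divergence theorem on $f(x)\,\mathrm{e}^{-\langle c,x\rangle}$ for the right constant $c$ and observe that $\langle x, \nabla(f(x)\mathrm{e}^{-\langle c,x\rangle})\rangle$ integrates to a boundary term that has a sign.

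Concretely, I expect the slickest proof to go: set $g(x) = f(x)\,\mathrm{e}^{-\langle c, x\rangle}$ for a constant $c$ to be chosen; then $g$ is log-concave and
\begin{equation*}
\int_C \langle x,\nabla f(x)\rangle\,\dbm x = \int_C \langle x, \nabla g(x)\rangle\,\dbm x + \int_C \langle c,x\rangle f(x)\,\dbm x = \int_C \langle x,\nabla g(x)\rangle\,\dbm x,
\end{equation*}
the last equality by the moment hypothesis. Now apply the previous reduction to $g$: it equals $\int_{\bd C} g(x)\langle x,\nu_C(x)\rangle\,\dbm\mathcal{H}^{n-1} - n\int_C g(x)\,\dbm x$. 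Choosing $c$ so that $g$ attains its maximum at an interior point $x_0$ of $C$ (possible since $\nabla\ln f$ has full range or by a compactness/continuity argument on $\nabla \ln f$ restricted to a sublevel set), and then translating so $x_0$ is the origin, log-concavity gives $g(x) \le g(x_0) = g(0)$ everywhere, while the vanishing first moment forces $0$ to be the centroid — so the boundary term $\int_{\bd C} g(x)\langle x,\nu_C(x)\rangle$ is dominated by $g(0)\,\int_{\bd C}\langle x,\nu_C(x)\rangle = n\,g(0)\,\V(C)$, and $n\int_C g \le n g(0)\V(C)$ as well, but these point the \emph{same} way, so I must instead compare $g$ to $g(0)$ more carefully, weighting by position: the correct inequality is $\int_{\bd C} g\,\langle x,\nu_C\rangle \le n\int_C g$ because $g$ is log-concave with centroid-type normalization — this is exactly the content one proves by noting $t\mapsto g(tx)$ is log-concave on $[0,1]$ for each boundary point $x$ and integrating in polar-type coordinates adapted to $\bd C$. \textbf{The main obstacle} I anticipate is precisely this last step — making the boundary-versus-interior comparison rigorous when $C$ is an arbitrary convex body (not smooth, $f$ not continuous up to the boundary) — and pinning down that equality propagates to force $g \equiv$ const, i.e.\ $f(x) = \gamma\,\mathrm{e}^{\langle c, x\rangle}$; the equality case requires that the subgradient inequality \eqref{eq:gradient_concave} be an equality for a.e.\ $x$, which by strict concavity of the logarithm (or by the equality case threaded through the divergence identity) exactly characterizes log-affine $f$.
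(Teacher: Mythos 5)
There is a genuine gap at the decisive step, and you flag it yourself: the boundary-versus-interior inequality $\int_{\bd C} g(x)\,\langle x,\nu_C(x)\rangle\,\dbm\mathcal{H}^{n-1}(x)\leq n\int_C g(x)\,\dbm x$ is never proved, and the route you sketch for it does not work as stated. Choosing $c$ so that $g=f\,\mathrm{e}^{-\langle c,\cdot\rangle}$ is maximized at some interior point $x_0$ and then ``translating so $x_0$ is the origin'' destroys the hypothesis you are relying on: $\int_C f(x)\,x\,\dbm x=0$ is a statement about the origin, and after translation the cancellation $\int_C\langle c,x\rangle f(x)\,\dbm x=0$ is no longer available. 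The correct choice, which you miss, is to take $c=g(0)$, a subgradient of $\ln f$ \emph{at the origin} — the one point where the moment condition lives. With that choice $g$ is maximized at $0$, so $t\mapsto g(tx)$ is a log-concave function on $[0,1]$ whose maximum is at $t=0$, hence non-increasing, hence $g(x)\leq n\int_0^1 t^{n-1}g(tx)\,\dbm t$ along every ray; integrating in your cone coordinates then does give the boundary inequality. So your architecture is salvageable, but as written the key inequality is asserted, not proved, and the proposed normalization is wrong. (There are also unresolved technicalities in even writing down the boundary term, since $f$ need not extend continuously to $\bd C$.)

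The paper avoids all of this: no divergence theorem, no boundary terms. It uses only the monotonicity of the subgradient of $\ln f$. Writing the subgradient inequality \eqref{eq:gradient_concave} at $x$ and at $y$ and adding gives $\langle g(y)-g(x),x-y\rangle\geq 0$; setting $y=0$ yields the pointwise bound $\langle g(x),x\rangle\leq\langle g(0),x\rangle$, i.e.\ $\langle\nabla f(x),x\rangle\leq\langle g(0),f(x)\,x\rangle$ wherever $f$ is differentiable. Integrating over $C$ and using $\int_C f(x)\,x\,\dbm x=0$ gives the inequality in three lines, and equality forces a.e.\ equality in \eqref{eq:gradient_concave} with $y=0$, so $\ln f$ is affine and $f=\gamma\,\mathrm{e}^{\langle c,x\rangle}$. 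Your equality discussion has the right germ (equality in the subgradient inequality a.e.\ characterizes log-affine $f$), but since your main inequality is not established, the proposal as it stands does not constitute a proof. Note that the divergence theorem is used in the paper, but in the proof of Theorem \ref{thm:main}, not here.
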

\begin{proof} By the concavity of $\ln f(x)$  we have for all $x,y\in
  \inte C$ (cf.~\eqref{eq:gradient_concave})  
\begin{equation}
\ln f(x)-\ln f(y) \leq 
\ip{g(y)}{x-y},
\label{eq:derivative}
\end{equation}
where $g(y)$ is a subgradient at $y$.
Interchanging the role of $x$ and $y$ and adding  leads to 
\begin{equation*}
0\leq \ip{g(y)-g(x)}{x-y}. 
\end{equation*}
Setting $y=0$ leads to $\ip{g(x)}{x}\leq \ip{g(0)}{x}$. For points
$x\in C^\prime$, where $C^\prime\subseteq C$ is the set
where  $\nabla f(x)$ exists,  this gives   
\begin{equation*}
\ip{\nabla f(x)}{x} \leq \ip{g(0)}
{f(x)\, x}. 
\end{equation*}
Hence in view of our assumption on $\nabla f$ and on the first moment of $f$ on $C$  we get  
\begin{equation*}
\begin{split}
\int_C \ip{x}{\nabla f(x)} \dbm x   & = \int_{C^\prime} \ip{x}{\nabla f(x)} \dbm x\\
&\leq 
\int_{C^\prime} \ip{g(0)} {f(x)\, x} \dbm x\\  
&=\ip{g(0)}{\int_C  
f(x)\, x\, \dbm x} =0. 
\end{split}
\end{equation*}
If the inequality holds with equality,  we must have almost everywhere equality in
\eqref{eq:derivative} for $y=0$. Hence, $\ln f(x)$ is an affine
function. Together with the positivity of $f$ on $\inte C$ there exist
  $c\in\R^n$, $\gamma\in\R_{>0}$ with
  $f(x)=\gamma\,\mathrm{e}^{\ip{c}{x}}$. On the other hand,
  if $f$ is of this form then $\nabla f(x)=f(x)\,c$ and so 
\begin{equation*} 
\int_C \ip{x}{\nabla f(x)} \dbm x =\ip{c}{\int_C
   f(x) x\, \dbm x} =0.
\end{equation*}

\end{proof}

\section{Constant volume Sections}
In order to treat the equality case in the subspace concentration
condition of Theorem \ref{thm:main} we need the following characterization. 
\begin{lemma} Let $P\in\Kon$ be a polytope, let $A$ be the set of its
  outer unit normals, and let $L\subset\R^n$ be a $k$-dimensional
  subspace, $0<k<n$. 
Then $f_L:P|L\to\R_{\geq 0}$  is a
  constant function if and only if there exists a subspace
  $\overline{L}$, complementary to $L$, such that 
\begin{equation*}
    A=(A\cap L)\cup (A\cap \overline{L}).
\end{equation*}
\label{lem:constant_section}
\end{lemma}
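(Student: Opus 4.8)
The statement is a characterization, so I would prove the two implications separately, with the ``if'' direction being the easy one and the ``only if'' direction carrying the real content.

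For the ``if'' direction, suppose $\overline{L}$ is complementary to $L$ with $A = (A\cap L)\cup(A\cap\overline{L})$. The idea is that the facet normals lying in $L$ are exactly those whose facet hyperplanes are ``vertical'' over $L$ (i.e.\ contain a translate of $\overline{L}$ only if $\overline L = L^\perp$, so I should be careful: the projection $P|L$ is what matters, and it is cut out by the inequalities coming from normals \emph{not} in $L$). So the boundary of $P|L$ is governed by the normals in $A\setminus L = A\cap\overline{L}$, while the constraints from $A\cap L$ describe how far the slice $P\cap(x+L^\perp)$ extends in the $\overline L$-ish directions. Writing $P$ in the coordinates adapted to the splitting $\R^n = L\oplus\overline{L}$ (not necessarily orthogonal, but one can still use an affine change of coordinates, or better, argue via Proposition~\ref{prop:subdivision}), the slice $P\cap(x+L^\perp)$ is, up to the projection, a translate of a fixed polytope determined only by the $\overline{L}$-normals, hence has constant $(n-k)$-volume on the interior of $P|L$. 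More robustly: the map $x\mapsto (x + \overline L)\cap P$, for $x$ ranging over $P|\overline{L}$ ... here I must be cautious about $L^\perp$ versus $\overline L$. Cleanest is: perform a volume-preserving linear map sending $\overline L$ to $L^\perp$ while fixing $L$; this changes $f_L$ only by a constant factor and does not affect the normal-cone combinatorics in the sense needed, reducing to the case $\overline L = L^\perp$, where the claim is transparent since $P = (P|L) \times (P|L^\perp)$ is a direct product and $f_L \equiv \V_{n-k}(P|L^\perp)$.

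For the ``only if'' direction, assume $f_L$ is constant on $P|L$. By Proposition~\ref{prop:subdivision} this constant polynomial of degree $0$ is realized on each cell of $\mathcal{D}(P)_L$, and the key structural fact is that the derivative of $f_L$ picks up contributions from exactly those facets of $P$ whose normals are \emph{not} in $L$: differentiating $x\mapsto \V_{n-k}(P\cap(x+L^\perp))$ in a direction $u\in L$ moves the slicing plane, and the rate of change is an integral over the ``lateral boundary'' $\bd P \cap (x+L^\perp)$ weighted by $\langle a_i, u\rangle/\langle a_i, \text{(something)}\rangle$, which vanishes precisely when $a_i \perp u$ for all $u \in L$, i.e.\ when $a_i \in L^\perp$... but we want $a_i\in\overline L$, so again the orthogonality is a red herring and I should phrase it as: $f_L$ constant forces, for every facet $F_i$ with $a_i\notin L$, that $F_i$ projects onto all of $P|L$ and its contribution cancels. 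The cleanest route: let $\overline L$ be the linear span of $\{a_i : a_i \notin L\}$ together with enough of $L^\perp$ — no. Let me instead take $W = \lin\{a_i : a_i\notin L\}$. Constancy of $f_L$ should force $W\cap L = \{0\}$ and $\dim W = n-k$, so $W$ is complementary to $L$; then any $a_j\in L$ trivially lies in $A\cap L$, and every $a_i\notin L$ lies in $W = \overline L$ by construction, giving $A = (A\cap L)\cup(A\cap\overline L)$. Establishing $\dim W = n-k$ and $W\cap L=\{0\}$ is where the work lies: if $\dim W < n-k$ the polytope would be unbounded (contradiction, since some slicing direction in $L^\perp$ is unconstrained by any non-$L$ normal — but $L$-normals don't bound $L^\perp$ directions, contradiction with compactness), and if $W\cap L\neq\{0\}$ or $\dim W>n-k$ one shows some cell of $\mathcal D(P)_L$ has nonzero-gradient $f_L$ by exhibiting a facet that ``tilts.''

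\textbf{Main obstacle.} The delicate point is the derivative/boundary-term computation showing that $\nabla f_L$ vanishes identically if and only if no facet normal tilts relative to $L$, i.e.\ translating the argument ``$f_L$ locally polynomial and globally constant $\Rightarrow$ all relevant facets are parallel to $L$'' into a clean statement about $A$. I expect the right tool is a direct analysis of a single cell $Q\in\mathcal D(P)_L$: over $\inte Q$, $P\cap(x+L^\perp)$ is combinatorially constant, so $f_L|_Q$ is a genuine polynomial whose being constant means each facet of $P$ active over $Q$ contributes a translation-only (not scaling) dependence on $x\in Q$, which is exactly the condition $a_i \in L \oplus (\text{directions not seen by } f_L)$; pinning down that the ``unseen directions'' assemble into a single complementary subspace $\overline L$ valid for \emph{all} cells simultaneously is the part requiring care, and I would handle it by first treating one full-dimensional cell and then using connectedness of $\inte(P|L)$ to propagate.
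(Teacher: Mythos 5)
Your identification of the candidate complement, $W=\lin\{a_i : a_i\notin L\}$, is correct, and your ``if'' direction is essentially the paper's (the polytope splits as a Minkowski sum and every slice $(x+L^\perp)\cap P$ is a translate of $P\cap L^\perp$), though the reduction you sketch is slightly off: a linear map preserving the fibration $x\mapsto x+L^\perp$ must fix $L^\perp$ setwise (not $L$), and what you want is a $T$ with $TL^\perp=L^\perp$ and $T\overline{L}^\perp=L$, so that the normals in $\overline L$ are carried into $L^\perp$. That is a fixable slip.

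The genuine gap is in the ``only if'' direction, and it is exactly the step you flag but do not carry out. Your plan is local: on a cell $Q$ of $\mathcal{D}(P)_L$ the slice varies affinely and $f_L|_Q$ is a constant polynomial, from which you want to conclude that the slice undergoes ``translation-only'' dependence on $x$ and that the unseen directions assemble into one complement. But constancy of the volume of an affinely varying family of convex bodies does \emph{not} by itself force the family to consist of translates (rotations and shears preserve volume), and vanishing of the first derivative of $f_L$ only gives a \emph{signed sum} of facet contributions equal to zero, not the vanishing of each individual contribution; indeed in the basic example of a sheared parallelotope every ``non-$L$'' facet does tilt, and the contributions cancel. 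What rules out non-translational volume-preserving variation is the convexity of $P$ together with the \emph{equality case of the Brunn--Minkowski inequality}: $f_L^{1/(n-k)}$ is concave, so $f_L$ constant forces equality in \eqref{eq:brunn_minkowski} between any two fibers, hence all fibers are homothetic, and having equal volume they are translates $(x+L^\perp)\cap P=t(x)+(P\cap L^\perp)$. This rigidity input is entirely absent from your proposal, and it is also what makes the global assembly immediate in the paper: the translation map $t$ is convex-linear with $t(0)=0$, hence linear, so $P=(P\cap\widetilde L)+(P\cap L^\perp)$ with $\widetilde L=\lin t(P|L)$, and reading off the facets of this Minkowski sum gives $A=(A\cap L)\cup(A\cap\widetilde L^\perp)$ in one stroke --- no cell-by-cell propagation is needed. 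Without Brunn--Minkowski (or an equivalent rigidity statement you would have to prove), your local polynomial analysis does not close.
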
 
\begin{proof} Suppose $f_L(x)=f_L(0)$ for all $x\in   P|L$.  Then, in
  particular, 
\begin{equation*}
 f_L((1-\lambda)x +\lambda\, 0)^{1/(n-k)} = (1-\lambda)f_L(x)^{1/(n-k)}+  \lambda\,f_L(0)^{1/(n-k)}
\end{equation*}
for all $\lambda\in[0,1]$ and $x\in P|L$. Hence we have equality 
in the Brunn-Minkowski inequality \eqref{eq:brunn_minkowski}
  applied in the space $L^\perp$ and thus, 
for every 
$x\in P|L$ there exists a uniquely determined  $t(x)\in\R^n$ such that 
\begin{equation*}
     (x+L^\perp)\cap P = t(x)+(L^\perp\cap P).
\end{equation*}  
Let   $t:P|L\to\R^n$ be the associated map sending  $x\mapsto
t(x)$. Then $t(\cdot)$ is injective and convex linear,
i.e., $t((1-\lambda)x+\lambda y)=(1-\lambda)t(x)+\lambda t(y)$ for
$x,y\in P|L$ and $\lambda\in [0,1]$. Thus it is an affine function,
and since 
$t(0)=0$ we conclude that $t$ is linear. Hence,  $\widetilde{L}=\lin
t(P|L)$, i.e., the linear hull  
of $t(P|L)$,  is a $k$-dimensional linear subspace and we have 
\begin{equation*}
      P=(P\cap \widetilde{L})+ (P\cap L^\perp).
\label{eq:poly_decom}
\end{equation*}
Since $P\cap \widetilde{L}$ is a $k$-dimensional polytope and $(P\cap
L^\perp)$ an $(n-k)$-dimensional polytope,  the  facets of $P$ are 
given by 
\begin{equation*}
           \tilde{F}+ (P\cap L^\perp)\text{ or } F+(P\cap \widetilde{L}),
\end{equation*}
where $\tilde{F}$ is a facet, i.e., a $(k-1)$-face of  $P\cap
\widetilde{L}$ and $F$ is a facet, i.e., a $(n-k-1)$-face of  $P\cap
L^\perp$.
In the first case the outer unit normal  of such a facet is contained in
$(L^\perp)^\perp=L$, and in the latter case in
$\widetilde{L}^\perp$. Hence $A=(A\cap L)\cup (A\cap
\widetilde{L}^\perp)$, and  
since $P$ is bounded we also know that
$\widetilde{L}^\perp$ is complementary to $L$.

On the other hand, if  we have $A=(A\cap L)\cup (A\cap \overline{L})$
for complementary subspaces $L,\overline{L}$, then it is easy to see
that 
\begin{equation*}
      P=(P\cap L^\perp) + (P\cap \overline{L}^\perp).
\end{equation*}
In particular, by the complementarity of the subspaces we know that
for every  $x\in P|L$ there exists an unique
$\overline{t}(x)\in P\cap \overline{L}^\perp$ with $\overline{t}(x)|L=x$.
Hence, $P\cap (x+L^\perp)=  \overline{t}(x)+(P\cap L^\perp)$ for every $x\in
P|L$, which shows $f_L(x)=f_L(0)$.
\end{proof}

Lemma \ref{lem:constant_section}  also allows us to give a weak
generalization  of a characterization of
parallelotopes due to Guggenheimer \& Lutwak
\cite{Guggenheimer:1976wa}. It  will be used for the discussion
of the equality case in Theorem \ref{thm:ufunctional}.

 Here we need the following notation: for $n$ linearly independent
unit vectors $V=\{v_1,\dots,v_n\}$ and a $k$-subset $I\subset\{1,\dots,n\}$,
$0<k<n$,  we
denote by $L_I(V)=\lin\{v_j :j \in I\}$ the $k$-dimensional subspace
generated by this selection of vectors.

\begin{lemma} Let $P\in \Kon$ be a polytope and let  $0<k<n$. There
  exist $n$ linearly independent unit vectors $V=\{v_1,\dots,v_n\}$ such
  that the function  $f_{L_I(V)}:P|L_I(V)\to \R_{\geq
    0}$ is constant for every $k$-subset $I\subset\{1,\dots,n\}$ if and only if $P$
  is a parallelotope. 
\label{lem:parallelotope}
\end{lemma}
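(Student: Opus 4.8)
The plan is to prove both directions, with the non-trivial one being that the existence of such a vector system $V$ forces $P$ to be a parallelotope. I would first record the easy direction: if $P$ is a parallelotope, say $P = \{x : |\ip{v_i}{x}| \le b_i\}$ for a basis of unit vectors $V = \{v_1,\dots,v_n\}$ with dual-type description, then for any $k$-subset $I$ the outer unit normals of $P$ all lie in $L_I(V) \cup L_{I^c}(V)$ (each facet normal is $\pm v_j$ for some $j$, and $j$ is either in $I$ or in its complement), so by Lemma~\ref{lem:constant_section} the function $f_{L_I(V)}$ is constant. One has to be slightly careful that the $v_j$ need not be orthogonal, so I would phrase the parallelotope as $P = M([-1,1]^n)$ for an invertible linear map $M$ and take $v_j$ to be the unit vectors along the edge directions (equivalently, the rows of $M^{-1}$ normalized); then $L_I(V)$ is spanned by a subset of edge directions and $L_{I^c}(V)$ by the complementary ones, and the decomposition $P = (P\cap L_I(V)) + (P \cap L_{I^c}(V))$ is immediate, which by the last paragraph of the proof of Lemma~\ref{lem:constant_section} gives $A = (A\cap L_I(V))\cup(A\cap L_{I^c}(V))$ with $\overline{L} = L_{I^c}(V)$.

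For the forward direction, suppose $V = \{v_1,\dots,v_n\}$ are linearly independent unit vectors such that $f_{L_I(V)}$ is constant for every $k$-subset $I$. By Lemma~\ref{lem:constant_section}, for each such $I$ there is a complementary subspace $\overline{L}_I$ with $A = (A\cap L_I(V)) \cup (A\cap \overline{L}_I)$. The first step is to extract from this a product decomposition of $P$. I would argue that the proof of Lemma~\ref{lem:constant_section} actually gives more: it shows $P = (P\cap L_I(V)^\perp) + (P \cap \overline{L}_I^{\,\perp})$, but more usefully $P = (P \cap W_I) + (P \cap W_I')$ where $W_I$ and $W_I'$ are the complementary subspaces such that every facet normal in $L_I(V)$ is orthogonal to $W_I'$ and every facet normal outside $L_I(V)$ is orthogonal to $W_I$. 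The cleanest route: apply the constancy hypothesis with $k$ fixed, then show by an induction / intersection argument that $P$ splits as a direct sum of $n$ segments. Concretely, I would take two $k$-subsets $I$ and $J$ that differ in one element and intersect the resulting decompositions to refine them; iterating over enough $k$-subsets one obtains $P = P_1 + \dots + P_n$ where each $P_\ell$ is a polytope lying in a $1$-dimensional subspace — i.e.\ a segment — hence $P$ is a parallelotope. The bookkeeping is that the subspaces appearing in the decomposition associated to $I$ are spanned by subsets of a single fixed basis (the edge directions of the eventual parallelotope), so one must check the decompositions for different $I$ are all compatible with one common frame.

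The main obstacle I expect is precisely this compatibility / refinement step: a priori the complementary subspace $\overline{L}_I$ furnished by Lemma~\ref{lem:constant_section} depends on $I$ and need not be one of the coordinate subspaces $L_{I^c}(V)$, and the facet-normal set $A$ could be large, so I cannot immediately conclude that the various splittings share a common set of $n$ edge directions. To handle this I would first treat the case $k=1$ (or reduce to it): if $f_{L_I(V)}$ is constant for every singleton $I = \{i\}$, then for each $i$ there is a hyperplane $\overline{L}_i$ complementary to $\R v_i$ with $A = (A \cap \R v_i) \cup (A \cap \overline{L}_i)$, meaning $A \setminus \overline{L}_i \subseteq \{\pm v_i\}$; running over $i=1,\dots,n$ and using that $\bigcap_i \overline{L}_i = \{0\}$ (as the $v_i$ are independent) forces every facet normal to be $\pm v_j$ for some $j$, whence $P$ is a box in the frame $V$. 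For general $k$ one reduces to the $k=1$ statement by noting that constancy of $f_{L}$ for a $k$-dimensional $L$ together with the product structure it induces implies constancy of $f_{L'}$ for the $1$-dimensional sub-subspaces $L' = \R v_j \subset L$ after passing to a suitable factor — or, more directly, one observes that Lemma~\ref{lem:constant_section} applied with varying $I$ of size $k$ already pins down $A \subseteq \bigcup_j \R v_j$ by a counting/intersection argument, and then the product decomposition of Lemma~\ref{lem:constant_section} finishes it. The remaining details — that a polytope whose facet normals are exactly $\pm v_1,\dots,\pm v_n$ for a basis $V$ is a parallelotope — are standard and I would state them without a lengthy computation.
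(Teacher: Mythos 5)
There is a genuine gap, and it sits exactly where you predicted it would: the compatibility step. Your outline correctly reduces the forward direction to showing that the complementary subspace $\overline{L}_I$ supplied by Lemma~\ref{lem:constant_section} is in fact $L_{I_c}(V)$ (equivalently, that $A\subseteq\bigcup_j \R v_j$), but you never prove this; you offer two routes and neither works as stated. The ``reduction to $k=1$'' rests on the claim that constancy of $f_L$ for a $k$-dimensional $L$ yields constancy of $f_{L'}$ for $1$-dimensional $L'\subset L$; this implication is false (for a prism $T\times[0,1]$ over a triangle $T\subset\R^2\times\{0\}$, $f_L$ is constant for $L=\R^2\times\{0\}$ but $f_{L'}$ is not constant for lines $L'\subset L$). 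The alternative, a ``counting/intersection argument'' pinning down $A\subseteq\bigcup_j\R v_j$, is precisely the content of the paper's proof and is not routine: the paper proves $\overline{L_I}=L_{I_c}$ by separate arguments for $k\le n/2$ (completing an index $j\in I_c$ to a $k$-set $J$ disjoint from $I$ and using $\dim(A\cap L_J)=k$) and for $k>n/2$ (a dimension-count contradiction showing $\dim A\le n-1$ otherwise), and only then concludes $A=\{\pm v_i\}$ by choosing, for a normal with two nonzero coordinates $j_1,j_2$, a $k$-set containing $j_1$ but not $j_2$. Moreover, even your $k=1$ argument has a hole: $\bigcap_i\overline{L}_i=\{0\}$ does not follow from independence of the $v_i$ (the hyperplanes $\overline{L}_i$ could share a common line avoiding every $\R v_i$); you must first force $\overline{L}_i=\lin\{v_j:j\ne i\}$, which requires using that $A\cap\R v_i\ne\emptyset$ for each $i$ because $A$ spans $\R^n$.

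A smaller but real slip occurs in the easy direction: the vectors $v_j$ must be the \emph{facet-normal} directions of the parallelotope, not its edge directions, and these are not equivalent for a non-rectangular parallelotope ($f_{\R e}$ for an edge direction $e$ of a skew parallelogram is not constant). With $V$ the set of normal directions, $A=\{\pm v_1,\dots,\pm v_n\}$ splits as $(A\cap L_I(V))\cup(A\cap L_{I_c}(V))$ for every $I$, and Lemma~\ref{lem:constant_section} applies directly; this is the one-line argument the paper gives. So the overall architecture of your proposal matches the paper's, but the central claim $\overline{L}_I=L_{I_c}(V)$ is asserted rather than proved, and the one concrete mechanism you propose for it does not hold.
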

Before giving the proof we want to remark that for arbitrary convex
bodies and $k=1$ the result was shown by  Guggenheimer \& Lutwak
\cite{Guggenheimer:1976wa}. In fact, they only assumed that the
function is constant for $(n-1)$-many $1$-subsets. 

\begin{proof} The outer unit normals $\pm v_i$, $1\leq i\leq
  n$,  of a parallelotope  are always contained in complementary
  subspaces. Hence the sufficiency follows from Lemma
  \ref{lem:constant_section}.

Now let  vectors $V=\{v_1,\dots,v_n\}$ be given such that
$f_{L_I(V)}:P|L_I(V)\to\R_{\geq 0}$ is a constant function for any $k$-subset
$I\subset\{1,\dots,n\}$. For short we will  write $L_I$ instead of
$L_I(V)$.  According to Lemma \ref{lem:constant_section}
there exists for any $k$-subset $I$ a complementary subspace $\overline{L_I}$ such that 
\begin{equation}
        A=(A\cap L_I)\cup(A\cap\overline{L_I}),      
\label{eq:split}
\end{equation}
where $A$ is the set of outer unit normals of the polytope
$P$. Since $\dim A=n$ we have 
\begin{equation}
         \dim(A\cap L_I)=k\text{ and }\dim(A\cap\overline{L_I})=n-k,
\label{eq:dim}
\end{equation}
where, in general, $\dim X$ is the dimension of the affine hull of
$X\subset \R^n$. 

For a subset $I$ let $I_c$ be its complement
with respect to $\{1,\dots,n\}$, i.e., $I_c=\{1,\dots,n\}\setminus I$. We claim 
\begin{equation}
        \overline{L_I} = L_{I_c}.    
\label{eq:claim}
\end{equation}
Assume first $k\leq n/2$, and let $j\in I_c$. Then we may complete 
$\{j\}$ to a $k$-subset $J$ with $I\cap J=\emptyset$ and thus $L_I\cap
L_J=\{0\}$. By
\eqref{eq:dim}, applied to $J$, there exist $a_{j_1},\dots,a_{j_k}\in A\cap L_J$ with
$v_j\in\lin\{a_{j_1},\dots,a_{j_k}\}$. On the other hand, since $L_I\cap
L_J=\{0\}$ we know  by \eqref{eq:split} that
$a_{j_1},\dots,a_{j_k}\in A\cap\overline{L_I}$.  Thus $v_j\in
\overline{L_I}$ and so $L_{I_c}\subset \overline{L_I}$. Since both
subspaces are of dimension $n-k$ we are done. 

Now let $k>n/2$ and assume that there exists an $a^*\in A\cap
\overline{L_I}$ such that $a^*\notin L_{I_c}$. Without loss of
generality  let $I=\{1,\dots,k\}$, and let 
\begin{equation*}
a^*=\sum_{i=1}^k \alpha_i\,v_i+\sum_{j={k+1}}^n\beta_j\,v_j
\end{equation*}
with some $\alpha_i,\beta_j\in\R$. 
Since $a^*\notin L_{I_c}\cup L_I$  we may assume
$\alpha_1\cdot\beta_n\ne 0$.  Then, for $J=\{n-k+1,\dots,n\}$  we get by
 \eqref{eq:split}  that $a^*\in A\cap \overline{L_J}$ and thus
\begin{equation}
        a^*\in (A\cap \overline{L_I})\cap(A\cap \overline{L_J}).
\label{eq:section}
\end{equation}
On the other hand, \eqref{eq:split} also implies that  each $a\in A\setminus((A\cap
\overline{L_I})\cup (A\cap \overline{L_J}))$ belongs to
$\lin\{v_{n-k+1},\dots,v_k\}$ which finally gives 
\begin{equation*}
   A=(A\cap\lin\{v_{n-k+1},\dots,v_k\})\cup (A\cap \overline{L_I})\cup (A\cap \overline{L_J})
\end{equation*}
By \eqref{eq:section} and \eqref{eq:dim} the union of the last two
sets is contained in a linear subspace of dimension $2(n-k)-1$ which
yields the contradiction that $\dim A\leq n-1$. Hence, $(A\cap
\overline{L_I})\subset L_{I_c}$ and on account of \eqref{eq:dim} we get 
 \eqref{eq:claim}.

Now let $a\in A$ and  let $a=\sum_{i=1}^n\alpha_i\,v_i$ for some
scalars $\alpha_i\in\R$. Suppose two of them are non zero, and let
$j_1,j_2$ be the corresponding indices. Let $J$ be a $k$-subset
containing $j_1$ but not $j_2$. Then $a\notin (A\cap L_{J})\cup (A\cap
L_{J_c})$ contradicting \eqref{eq:claim} and \eqref{eq:split}.  
 Hence, $A\subset\{\pm v_i: 1\leq i\leq n\}$, and  since none strict subset of the
latter set can be the outer unit normals of a bounded set we
conclude 
\begin{equation*}
A=\{\pm v_i: 1\leq i\leq n\},
\end{equation*} 
and $P$ is a parallelotope.
\end{proof}

\section{Proof of theorems}
The proof of Theorem \ref{thm:main} relies on the Gaussian divergence
theorem, which is usually stated in the form  (cf., e.g., \cite{Konig:1964vd})
\begin{equation*}
\int_{V} \dive F(x)\, \mathrm{d}{x}=\int_{\bd' V}\ip{F(x)}{\nu(x)}\, \mathrm{d}{\mathcal{H}^{n-1}(x)},
\end{equation*}
where $V\subset\R^n$ is a compact subset with a piecewise smooth boundary, $F:\R^n\to\R^n$ is a
continuously differentiable vector field in an open neighborhood of
$V$, $\bd' V$ is that part of the boundary of $V$ admitting a unique
outer normal  $\nu(x)$ in $x\in\bd V$, and
$\dive F$ is the divergence of the vector field $F$, i.e., $\dive F
=\sum_{i=1}^n \frac{\partial F_i}{\partial x_i}$ where
$F(x)=(F_1(x),\dots, F_n(x))^\intercal$. 

Here we want to apply this theorem to the vector field 
\begin{equation}
 F_L:P|L\to L \text{ with } x\mapsto f_L(x)\,x,
\label{eq:our_funct}
\end{equation}
where $P\in\Kon$ is a polytope, $L$ is a $k$-dimensional linear
subspace, $0<k<n$, and  $f_L:P|L\to\R_{\geq
0}$ is the volume intersection function
$f_L(x)=\V_{n-k}((x+L^\perp)\cap P)$. This vector field is, in general, not
continuously differentiable. There are, however, numerous extensions of the divergence theorem
to much more general sets  than  compact sets and to functions with
certain singularities which also cover the case we need (see, e.g., \cite{Pfeffer:1990ts}). On the other hand, our vector field \eqref{eq:our_funct} is
``just'' a piecewise polynomial vector field, and so we briefly state how the Gaussian divergence theorem can be applied in 
our setting.   
 
\begin{figure}[tb]
\begin{tikzpicture}[scale=0.7, line join=bevel, x={({cos(0)*1cm},{sin(0)*1cm})}, y={(2.85mm, 1.6mm)}, z={(-0.0cm,1.cm)}]

\coordinate (B1) at (1,-1,3);
\coordinate (B2) at (1,1,3);
\coordinate (B3) at (-3,1,3);
\coordinate (B4) at (-3,-1,3);

\coordinate (C1) at (1,-3,1);
\coordinate (C2) at (1,3,1);
\coordinate (C3) at (-1,3,1);
\coordinate (C4) at (-1,-3,1);

\draw [fill opacity=0.1,fill=gray] (B2) -- (B3) -- (C3) -- (C2)--cycle;
\draw [fill opacity=0.1,fill=gray] (C1) -- (C2) -- (C3) -- (C4) --
cycle;
  \draw [fill opacity=0.1,fill=gray] (B1) -- (C1) -- (C2) -- (B2) --
  cycle;
  \draw [fill opacity=0.1,fill=gray] (B3) -- (C3) -- (C4) -- (B4) --
  cycle;
 \draw [fill opacity=0.1,fill=gray] (B4) -- (B1) -- (C1) -- (C4)--cycle;
\draw [fill opacity=0.1,fill=gray] (B1) -- (B2) -- (B3) -- (B4) --
cycle;

\draw [color=blue, ultra thick] (C1) -- (C2) -- (C3) -- (C4) -- cycle;
\draw [color=blue, ultra thick] (B1) -- (C1) -- (C2) -- (B2) -- cycle;
\draw [color=blue, ultra thick] (B3) -- (C3) -- (C4) -- (B4) --cycle;
\draw [color=blue, ultra thick] (B4) -- (B1) -- (C1) -- (C4)--cycle;
\draw [color=blue, ultra thick] (B2) -- (B3) -- (C3) -- (C2)--cycle;
\draw [color=blue, ultra thick] (B1) -- (B2) -- (B3) -- (B4) --cycle;

\node[color=red, scale=1.2] at (B1) {$\bullet$};
\node[color=red, scale=1.2] at (B2) {$\bullet$};
\node[color=red, scale=1.2] at (B3) {$\bullet$};
\node[color=red, scale=1.2] at (B4) {$\bullet$};
\node[color=red, scale=1.2] at (C1) {$\bullet$};
\node[color=red, scale=1.2] at (C2) {$\bullet$};
\node[color=red, scale=1.2] at (C3) {$\bullet$};
\node[color=red, scale=1.2] at (C4) {$\bullet$};

\draw [fill opacity=0.1, fill=green] (-4,-5,-2) -- (-4,5,-2) --
(4,5,-2) -- (4,-5,-2)  --cycle;

\draw [fill opacity=0.2, fill=gray]  (1,3,-2) --
(-1,3,-2) -- (-3,1,-2) -- (-3,-1,-2) -- (-1,-3,-2) -- (1,-3,-2) --cycle;
 \draw [color=blue, ultra thick] (1,3,-2) --
 (-1,3,-2) -- (-3,1,-2) -- (-3,-1,-2) -- (-1,-3,-2) -- (1,-3,-2) --cycle;

\node[color=red, scale=1.] at (1,3,-2) {$\bullet$};
\node[color=red, scale=1.] at (-1,3,-2) {$\bullet$};
\node[color=red, scale=1.] at (-3,1,-2) {$\bullet$};
\node[color=red, scale=1.] at (-3,-1,-2) {$\bullet$};
\node[color=red, scale=1.] at (-1,-3,-2) {$\bullet$};
\node[color=red, scale=1.] at (1,-3,-2) {$\bullet$};

\draw [color=blue, densely dashed, ultra thick] (1,-1,-2) -- (1,1,-2)
-- (-3,1,-2) -- (-3,-1,-2)  --cycle;

\draw [color=blue, densely dashed, ultra thick]   (1,3,-2) --
(-1,3,-2) -- (-1,-3,-2) -- (1,-3,-2) --cycle;

\node[scale=1.0] at (4,4,-1.8) {$L$};
\node[scale=1.1] at (-3,3.4,-1.96) {$P|L$};

\draw[color=orange, ultra thick, ->] (-3.2,0,1) -- (-3.2,0,-0.5); 
\node[scale=1.0] at (-3.3,-0.7,0.25) {$L^\perp$};
\node[scale=1.1] at (1,3,1.6) {$P$};
\node[scale=1] at (0,0.1,-1.9) {$P_i$};
\end{tikzpicture}
\caption{The polytopal subdivision induced by the orthogonal projection of the
$(k-1)$-skeleton of $P$ onto $L$}
\end{figure}
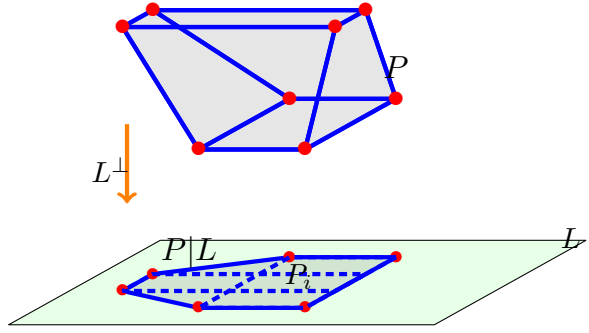

\begin{lemma}\label{gausslemma}
Let $P$ be a polytope, and let $L\subset\R^n$ be
a $k$-dimensional linear subspace, $0<k<n$.  Let $\mathcal{D}(P)_{L}=\{P_1,\dots,P_r\}$
be the polytopal subdivision induced by the orthogonal projection of the
$(k-1)$-skeleton $S$ of $P$ onto $L$.  
Let $F:P|L\to L$ be a
vector field which is a polynomial in each component and  on each
 polytope $P_i\in \mathcal{D}(P)_{L}$. Then 
\begin{equation*}
\int_{(P|L)\setminus(S|L)}\dive F(x)\,\dbm{x}=
\int_{\bd'(P|L)} \ip{F(x)}{a(x)}\,\dbm{\mathcal H}^{k-1}(x).
\end{equation*}
Here $a(x)\in L$ is the unique outer normal of the polytope $P|L$ in the
boundary point $x\in \bd'(P|L)$.
\end{lemma}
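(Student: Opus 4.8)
The statement is essentially a piecewise version of the divergence theorem, so the natural strategy is to apply the classical Gaussian divergence theorem on each cell $P_i$ of the subdivision $\mathcal{D}(P)_L$ and then sum over $i$, checking that the interior contributions along shared facets cancel. First I would observe that on each $k$-polytope $P_i\subseteq P|L$ the vector field $F$ is (by hypothesis) a polynomial, hence $C^\infty$ on an open neighborhood of $P_i$, so the classical divergence theorem of König \cite{Konig:1964vd} applies verbatim:
\begin{equation*}
\int_{P_i}\dive F(x)\,\dbm{x}=\int_{\bd P_i}\ip{F(x)}{\nu_i(x)}\,\dbm{\mathcal H}^{k-1}(x),
\end{equation*}
where $\nu_i(x)$ is the outer unit normal of $P_i$ at $x$. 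Summing over $i=1,\dots,r$, the left-hand side becomes $\int_{P|L}\dive F\,\dbm x$; since the union of the boundaries $\bd P_i$ has $\mathcal{H}^k$-measure zero and contains $S|L$, this equals $\int_{(P|L)\setminus(S|L)}\dive F\,\dbm x$, because removing a null set does not change the integral (here one also uses that $\dive F$ is bounded, being piecewise polynomial on a bounded set).

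The main point is the cancellation on the right. Each $(k-1)$-dimensional face $G$ in the union $\bigcup_i \bd P_i$ is of one of two types: either $G\subseteq\bd(P|L)$, in which case it belongs to $\bd P_i$ for exactly one cell $P_i$ and the outer normal $\nu_i(x)$ coincides with the outer normal $a(x)$ of $P|L$; or $G$ lies in the relative interior of $P|L$, in which case — because $\mathcal{D}(P)_L$ is a polytopal subdivision, so any two cells meet in a common face — $G$ is a common facet of exactly two cells $P_i$ and $P_j$, whose outer normals along $G$ satisfy $\nu_i(x)=-\nu_j(x)$ for $\mathcal{H}^{k-1}$-a.e. $x\in G$. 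Since $F$ is continuous across $G$ (it is a genuine function on $P|L$, not merely a collection of unrelated polynomial pieces — this is the role of the hypothesis that $F\colon P|L\to L$ is a single vector field), the two contributions $\ip{F(x)}{\nu_i(x)}$ and $\ip{F(x)}{\nu_j(x)}$ cancel. Hence in the total sum only the boundary faces of $P|L$ survive, giving
\begin{equation*}
\sum_{i=1}^r\int_{\bd P_i}\ip{F(x)}{\nu_i(x)}\,\dbm{\mathcal H}^{k-1}(x)=\int_{\bd'(P|L)}\ip{F(x)}{a(x)}\,\dbm{\mathcal H}^{k-1}(x),
\end{equation*}
the set $\bd(P|L)\setminus\bd'(P|L)$ of boundary points without a unique outer normal being a $(k-2)$-dimensional skeleton, hence $\mathcal{H}^{k-1}$-null. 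Combining the two displays yields the claim.

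\textbf{Expected obstacle.} The genuinely delicate part is not the algebra of cancellation but the measure-theoretic bookkeeping: one must be careful that the interior $(k-1)$-faces of the subdivision are matched in pairs with opposite normals, which requires that $\mathcal{D}(P)_L$ is a \emph{bona fide} polyhedral complex (any two cells intersect in a common face) — this is exactly what is asserted in the description of $\mathcal{D}(P)_L$ recalled before Proposition \ref{prop:subdivision}, coming from the fact that the preimage of the boundary of any cell lies in a $(k-1)$-face of $P$. One should also note that the continuity of $F$ across interior faces is automatic here because $F=f_L(x)\,x$ with $f_L$ continuous on $P|L$; abstractly, the lemma as stated already builds in that $F$ is a single function, so no extra argument is needed. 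A final minor point is to justify that the various exceptional sets ($S|L$ on the domain side, $\bd(P|L)\setminus\bd'(P|L)$ and the relative boundaries of faces on the boundary side) are all of the appropriate Hausdorff measure zero; this is standard for polytopal complexes.
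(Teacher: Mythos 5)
Your proof is correct and follows essentially the same route as the paper's: apply the classical divergence theorem on each cell $P_i$ (where $F$ extends polynomially to a neighborhood), sum, discard the null set $S|L$ on the domain side, and cancel the contributions of interior facets shared by exactly two cells with opposite normals, leaving only $\bd'(P|L)$. The extra remarks you add (continuity of $F$ across shared facets, boundedness of the piecewise polynomial $\dive F$, and the $\mathcal H^{k-1}$-nullity of the lower-dimensional exceptional sets) are all sound and only make explicit what the paper leaves implicit.
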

\begin{proof}  Since $F$ is a polynomial vector field on 
each $P_i$, $F$ can canonically be extended to
an open neighborhood of $P_i$, and hence we can use the  divergence theorem of Gauss and get
\begin{equation*}
\int_{P_i}\dive F(x)\,\dbm{x}=\int_{\bd'(P_i)} \langle F(x),a_i(x)\rangle\,\dbm{\mathcal H}^{k-1}(x),
\end{equation*}
with $a_i(x)\in L$ being the unique outer normal of  $P_i$ in $x\in
\bd'(P_i)$. 
Thus, in particular,  $\int_{P_i \setminus(S|L)}\dive F(x)\,\dbm{x}$
is well defined and  since $S|L$ is a set of measure $0$, we have
\begin{equation*}
\int_{(P|L)\setminus(S|L)} \dive F(x)\,\dbm{x}=
\sum_{i=1}^r \int_{\bd'(P_i)}\ip{F(x)}{a_i(x)}\,\dbm{\mathcal H}^{k-1}(x).
\end{equation*}
Every $x\in\bd'(P_i)\setminus\bd(P|L)$ is contained in exactly one
more $P_j$, $j\neq i$, 
and $a_j(x)=-a_i(x)$. Hence,   with
$a_i(x)=a(x)$ for $x\in\bd'(P_i)\cap\bd'(P|L)$ we get 
\begin{equation*}
\sum_{i=1}^r  \int_{\bd'(P_i)}\ip{F(x)}{a_i(x)}\,\dbm{\mathcal H}^{k-1}(x)=
\int_{\bd'(P|L)}\,\ip{F(x)}{a(x)}\dbm{\mathcal H}^{k-1}(x),
\end{equation*}
which finishes the proof.
\end{proof}

We are now ready to prove Theorem \ref{thm:main}. 
\begin{proof}[Proof of Theorem \ref{thm:main}] Let $P\in\Kon$ be a
  polytope with centroid at the origin. Let $F_1,\dots,F_m$ be the
  facets of $P$, and let
$a_i\in S^{n-1}$ be the outer unit normal of the facet $F_i$,
$1\leq i\leq m$.  Let $A= \{a_1,\dots,a_m\}$, and let $b_i>0$ be the
distance of the facet $F_i$ from the origin.  Let $L$ be a
 $k$-dimensional subspace with $0<k< n$. We have to show
 (cf.~\eqref{eq:sccp} and \eqref{eq:polytope_volume})
\begin{equation}
  \sum_{a_i\in L}\V_{n-1}(F_i)\,b_i  \leq k\,\V(P) 
  .
\label{eq:toshow}
\end{equation}
with equality if and only if there exists
a subspace $\overline{L}$, complementary to $L$, so
that $A=(A \cap
L)\cup (A\cap\overline{L})$. 

According to Proposition \ref{prop:subdivision}, the vector field $F_L(x)=f_L(x)\,x$
(cf.~\eqref{eq:our_funct}) satisfies the assumptions of Lemma
\ref{gausslemma} and on account of 
\begin{equation*}
\dive F_L(x)= k\,f_L(x) + \ip{\nabla f_L(x)}{x}  
\end{equation*}
we get 
\begin{equation}
\begin{split}
\int_{\bd'(P|L)}  &f_L(x)\ip{x}{a(x)}\dbm\mathcal{H}^{k-1}(x)  \\
&=k\,\int_{(P|L)\setminus(S|L)}\,f_L(x)  \dbm{x} + \int_{(P|L)\setminus(S|L)}\,\ip{\nabla f_L(x)}{x}\dbm{x}
\\ 
&=k\,\V(P)+\int_{(P|L)\setminus(S|L)}\,\ip{\nabla f_L(x)}{x}\dbm{x},\\
\end{split}
\label{eq:together}
\end{equation}
where in the last step we used again  that $S|L$ is a
 set of measure $0$.

Now let $\tilde{a}_1,\dots,\tilde{a}_l$ be the outer unit normals of
the facets of $P|L$, i.e.,
the $(k-1)$-faces of $P|L$, having distance $\tilde{b}_i$ to the origin.  Let 
$\tilde{F}_i=P\cap\{x\in\R^n : \langle \tilde{a}_i,x\rangle
=\tilde{b}_i\}$, $1\leq i\leq l$, be the faces of $P$ projected onto
the facets of $P|L$. Taking into account that $f_L$  measures the
$(n-k)$-dimensional volume we have 
\begin{equation*}
  \int_{\bd'(P|L)} f_L(x) \ip{x}{a(x)} \,\dbm \mathcal{H}^{k-1}(x) = \sum_{i=1}^l
 \V_{n-1}(\tilde{F}_i)\,\tilde{b}_i. 
\end{equation*}
Hence $\tilde{F}_i$ contributes to the above sum only when it is a  facet
of $P$, i.e., $F_j=\tilde{F}_i$, $a_j=\tilde{a}_i\in L$ and
$b_j=\tilde{b}_i$ for a certain $j\in\{1,\dots,m\}$.  Thus we may write (cf.~\eqref{eq:together})
\begin{equation}
\sum_{a_i\in L}\V_{n-1}(F_i)\,b_i = k\,\V(P) + 
\int_{(P|L)\setminus(S|L)}\,\ip{\nabla f_L(x)}{x}\dbm{x}. 
\label{eq:final}
\end{equation}
Since $0$ is the centroid of $P$ we have (cf.~\eqref{eq:centroid}) 
\begin{equation*}
  \int_{P|L} f_L(x)\, x \,\dbm x=0, 
\end{equation*}
and since $S|L$ is a set of measure $0$ we may apply  Lemma
\ref{lem:logconcave} to $f_L$. Thus 
\begin{equation}  
\int_{(P|L)\setminus(S|L)} \ip{\nabla
f_L(x)}{x} \,\dbm x\leq 0,
\label{eq:int_gradient}
\end{equation}
which yields \eqref{eq:toshow}  by \eqref{eq:final}. 

Now suppose we have equality in \eqref{eq:toshow}. Then we also have
equality in \eqref{eq:int_gradient} and by Lemma
\ref{lem:logconcave} there exist $\gamma >0$, $c\in\R^n$ such that 
$f_L(x)=\gamma\,\mathrm{e}^{\ip{c}{x}}$. Since the $(n-k)$-th root of
$f_L(x)$ is concave we must have $c=0$, i.e., $f_L(x)$ is a constant
function. Thus, by Lemma \ref{lem:constant_section}, there exists a
complementary subspace $\overline{L}$ with $A=(A\cap
L)\cup(A\cap\overline{L})$. 

On the other hand, if we have such a partition of $A$ into
complementary subspaces $L$ and $\overline{L}$, $\dim L=k$ and
$\dim\overline{L}=n-k$, then we may either apply Lemma
\ref{lem:constant_section} and then \eqref{eq:final}, or we just observe
that in this case we may write (cf.~\eqref{eq:polytope_volume})
\begin{equation*}
\begin{split} 
k\,\V(P) +(n-k)\,\V(P) &= n \V(P)\\ 
       &=\sum_{a_i\in A\cap L} \V_{n-1}(F_i)\,b_i + \sum_{a_i\in A\cap \overline{L}} \V_{n-1}(F_i)\,b_i.
\end{split}      
\end{equation*}
Hence, in view of the validity of the  inequality  \eqref{eq:toshow} for $L$ and
$\overline{L}$,  we  have actually equality  in \eqref{eq:toshow} for
$L$ and $\overline{L}$.
\end{proof}

Next we come to the proof of Theorem \ref{thm:ufunctional}, and here 
 we follow  the approach of He, Leng \& Li \cite{He:2006dm}.
\begin{proof}[Proof of Theorem \ref{thm:ufunctional}] Let $P\in\Kon$
  be a polytope with centroid at the  origin. Let $F_1,\dots,F_m$ be the facets
  of $P$ with associated outer unit normals  $a_1,\dots,a_m$ and
  let $C_i=\conv\{0, F_i\}$, $1\leq i\leq m$. For $1\leq k\leq n$ we set 
\begin{equation}
   \sigma_k(P)^k=\sum_{a_{i_1}\wedge \cdots\wedge a_{i_k}\ne 0}
  \V(C_{i_1})\cdot\ldots\cdot \V(C_{i_k}).
\label{eq:generalu}
\end{equation}
We have to show $\U(P)^n=\sigma_n(P)^n\geq n!/n^n\,\V(P)^n$ with equality
if and only if $P$ is a parallelotope.  On account of Theorem
\ref{thm:main} (cf.~\eqref{eq:sccp}) we may write  for $0<k<n$
\begin{equation}
\begin{split}
\sigma_{k+1}(P)^{k+1} & = \sum_{a_{i_1}\wedge \cdots \wedge a_{i_{k}}\ne 0}
\V(C_{i_1})\cdot\ldots\cdot \V(C_{i_{k}}) \times\\ 
 &\quad\quad\quad\hspace{2cm}
 \left(\V(P)-\sum_{a_l\in\lin\{a_{i_1},\cdots,a_{i_{k}}\}} 
     \V(C_l)\right) \\
&\geq  \sum_{a_{i_1}\wedge\cdots\wedge a_{i_{k}}\ne 0}
\V(C_{i_1})\cdot\ldots\cdot
\V(C_{i_{k}})\left(\V(P)\left(1-\frac{k}{n}\right)\right)\\ 
&=\frac{n-k}{n}\V(P)\,\sigma_{k}(P)^{k}.
\end{split}
\label{eq:recursion}
\end{equation}
Since $\sigma_1(P)=\V(P)$  this recursion gives  
\begin{equation*}
 \U(P)^n=\sigma_n(P)^n\geq
 \frac{1}{n}\V(P)\,\sigma_{n-1}(P)^{n-1}\geq \cdots\geq \frac{(n-1)!}{n^{n-1}}\V(P)^n,
\end{equation*}
which is the desired inequality.

 Having equality we must have equality  in each step of  
the recursion \eqref{eq:recursion}. Hence for any $k$-subset $I\subset\{1,\dots,m\}$
such that $L_{I}=\lin\{a_j : j\in I\}$ is of dimension $k$, we have 
\begin{equation*}
\sum_{a_l\in L_{I}}   \V(C_l) = \frac{k}{n}\V(P).
\end{equation*}
Thus by the equality case of Theorem \ref{thm:main}
(cf.~\eqref{eq:sccp}) we can find a complementary subspace
$\overline{L}_{I}$ with $A=(A\cap L_{I})\cup
(A\cap \overline{L}_{I})$, where $A$ is the set of all
outer unit normals of $P$. By Lemma \ref{lem:constant_section} this shows that
$f_{L_{I}}$ is a constant function for any  $k$-subset $I\subset\{1,\dots,m\}$
such the vectors $a_j$, $j\in I$, are linearly independent. Since there
are $n$ linearly independent vectors in $A$ we are in the position to
use Lemma \ref{lem:parallelotope} which gives that $P$ is a
parallelotope. 

On the other hand if $P$ is a parallelotope then
$\V(C_i)=\frac{1}{2\,n}\V(P)$ for all cones and thus we have equality.   
\end{proof}

\bigskip
\noindent 
{\it  Acknowledgement.} The authors thank  Rolf Schneider,
Eugenia Saor{\'\i}n G{\'o}mez, Guangxian Zhu and the referees for their very helpful
comments and suggestions.
\bigskip
\bigskip


\begin{thebibliography}{25}

\bibitem{Ball:1988vo}
K.M. Ball.
\newblock{Logarithmically concave functions and sections of convex
  sets in $\R^{n}$}.
\newblock{\em Studia Math.}, 88:69--84, 1988.

\bibitem{Barthe:2005di}
F. Barthe, O. Gu{\'e}don, Sh. Mendelson, and A. Naor.
\newblock {A probabilistic approach to the geometry of the $l_p^n$-ball}.
\newblock {\em Ann. Probab.}, 33(2):480--513, 2005.

\bibitem{Barvinok:2002vr}
A. Barvinok.
\newblock {\em {A course in convexity}}.
\newblock AMS, 2002.

\bibitem{Boroczky:2012vm}
K. B{\"o}r{\"o}czky, Jr, E. Lutwak, D. Yang, and G. Zhang.
\newblock {Affine images of isotropic measures}; 
\newblock submitted.

\bibitem{Boroczky:2012us}
K. B{\"o}r{\"o}czky, Jr, E. Lutwak, D. Yang, and G. Zhang.
\newblock {The logarithmic Minkowski problem}.
\newblock {\em J. Amer. Math. Soc. (JAMS)}, 26:831--852, 2013.

\bibitem{Gardner:2002up}
R.J. Gardner.
\newblock {The Brunn-Minkowski inequality}.
\newblock {\em Bull. Amer. Math. Soc. (N.S.)}, 29(3):355--405, 2002.

\bibitem{Gardner:2006wn}
R.J. Gardner.
\newblock {\em {Geometric tomography}}.
\newblock Cambridge University Press,  2nd edition, 2006.

\bibitem{Gardner:1994ub}
R.J. Gardner and P. Gritzmann.
\newblock {Successive determination and verification of polytopes by their
  X-rays}.
\newblock {\em J. London Math. Soc.}, 50:375--391, 1994.

\bibitem{Gromov:1987vv}
M. Gromov and V.D. Milman.
\newblock {Generalization of the spherical isoperimetric inequality to
  uniformly convex Banach spaces}.
\newblock {\em Compos. Math.}, 62(3):263--282, 1987.

\bibitem{Gruber:2007um}
P.M. Gruber.
\newblock {\em {Convex and discrete geometry}}.
\newblock Springer, 2007.

\bibitem{Guggenheimer:1976wa}
H. Guggenheimer and E. Lutwak.
\newblock {A characterization of the $n$-dimensional parallelotope}.
\newblock {\em Amer. Math. Monthly }, 83:475--478, 1976.


\bibitem{HaberlParapatits:2013}
Ch. Haberl and L. Parapatits. 
\newblock{The centro-affine Hadwiger theorem}.  
\newblock{{\em J. Amer. Math. Soc. (JAMS)}},  accepted for publication.


\bibitem{He:2006dm}
B. He, G. Leng, and K. Li.
\newblock {Projection problems for symmetric polytopes}.
\newblock {\em Adv. Math.}, 207(1):73--90, 2006.

\bibitem{Henk:2005vh}
M. Henk, A. Sch{\"u}rmann, and J.M. Wills.
\newblock {Ehrhart polynomials and successive minima}.
\newblock {\em Mathematika}, 52(1-2):1--16, 2005.

\bibitem{Konig:1964vd}
H. K{\"o}nig.
\newblock {Ein einfacher Beweis des Integralsatzes von Gau{\ss}}.
\newblock {\em Jahresber. Dtsch. Math.-Ver.},
  66:119--138, 1964.

\bibitem{Ludwig:2010dk}
M. Ludwig.
\newblock {General affine surface areas}.
\newblock {\em Adv.Math.}, 224(6):2346--2360, 2010.

\bibitem{Ludwig:2010tm}
M. Ludwig and M. Reitzner.
\newblock {A classification of $\operatorname{SL}(n)$ invariant
  valuations}, {\em  Ann. of Math.}, 172:1219--1267, 2010.

\bibitem{Lutwak:1993uh}
E. Lutwak.
\newblock {The Brunn-Minkowski-Firey theory. I. Mixed volumes and the Minkowski
  problem}.
\newblock {\em J. Differential Geom.}, 38(1):131--150, 1993.

\bibitem{Lutwak:2001dn}
E. Lutwak, D. Yang, and G. Zhang.
\newblock {A new affine invariant for polytopes and Schneider's projection
  problem}.
\newblock {\em Trans. Amer. Math. Soc.},
  353(5):1767--1779, 2001.

\bibitem{Naor:2007ey}
A. Naor.
\newblock {The surface measure and cone measure on the sphere of $l_p^n$}.
\newblock {\em Trans. Amer. Math. Soc.},
  359(3):1045--1079, 2007.

\bibitem{Naor:2003gb}
A. Naor and D. Romik.
\newblock {Projecting the surface measure of the sphere of $l_p^n$}.
\newblock {\em Ann. Inst. Henri Poincar{\'e} Probab. Stat.}, 39(2):241--261, 2003.

\bibitem{Paouris:2012ia}
G. Paouris and E.M. Werner.
\newblock {Relative entropy of cone measures and $L_p$ centroid bodies}.
\newblock {\em Proc. London Math. Soc.}, 104(2):253--286, 2012.

\bibitem{Pfeffer:1990ts}
W.F. Pfeffer.
\newblock {Divergence theorem for vector fields with singularities}.
\newblock {\em Lect. Notes  Math.}, 1419:150--166, 1990.

\bibitem{Rockafellar:1997ww}
R.T. Rockafellar.
\newblock {\em {Convex analysis}}.
\newblock Princeton University Press, 1997.

\bibitem{Schneider:1993td}
R. Schneider.
\newblock {Convex bodies: the Brunn-Minkowski theory}.
\newblock {\em Cambridge University Press},  1993.

\bibitem{Stancu:2002}
A. Stancu. 
\newblock{The discrete planar $L_0$-Minkowski problem.}
\newblock{\em Adv. Math.}, 167(1):160--174, 2002.

\bibitem{Stancu:2003}
A. Stancu. 
\newblock{On the number of solutions to the discrete two-dimensional $L_{0}$-Minkowski problem.}
\newblock{\em Adv. Math.}, 180(1):290--323, 2003.


\bibitem{Stancu:2012ur}
A. Stancu.
\newblock {Centro-affine invariants for smooth convex bodies}.
\newblock {\em Int. Math. Res. Not.}, 2012:2289--2320, 2012.



\bibitem{Xiong:2010fl}
G. Xiong.
\newblock {Extremum problems for the cone volume functional of convex
  polytopes}.
\newblock {\em Adv. Math.}, 225(6):3214--3228, 2010.

\bibitem{Ziegler:1995gl}
G.M. Ziegler.
\newblock {\em {Lectures on polytopes}}.
\newblock Springer, 1995.

\end{thebibliography}

\end{document}